\theoremstyle{plain}
\newtheorem{thm}{Theorem}[section]
\newtheorem{lem}[thm]{Lemma}
\newtheorem{conj}{Conjecture}
\newtheorem{cor}[thm]{Corollary}
\newtheorem{obs}[thm]{Observation}
\newtheorem{prp}[thm]{Proposition}
\newtheorem{prob}[thm]{Problem}
\newtheorem{defn}[thm]{Definition}
\newtheorem{remark}[thm]{Remark}
\newtheorem{example}[thm]{Example}
\newcommand{\bm}[1]{\mbox{\boldmath $#1$}}
\DeclareMathOperator{\sd}{sd}
\DeclareMathOperator{\A}{\bm{A}}
\DeclareMathOperator{\F}{\mathcal{F}}
\DeclareMathOperator{\col}{col}
\DeclareMathOperator{\B}{\mathcal{B}}
\title[The $\gamma$-vector of a barycentric subdivision]{The
$\bm{\gamma}$-vector of a barycentric subdivision}
\author[E. Nevo]{Eran Nevo}
\address{Department of Mathematics, Cornell University, Ithaca, NY 14853, USA}
\email{eranevo@math.cornell.edu}
\author[T. K. Petersen]{T. Kyle Petersen}
\author[B. E. Tenner]{Bridget Eileen Tenner}
\address{Department of Mathematical Sciences, DePaul University, Chicago, IL 60614, USA}
\email{tkpeters@math.depaul.edu\\bridget@math.depaul.edu}
\begin{document}

\begin{abstract}
We prove that the $\gamma$-vector of the barycentric subdivision of a simplicial sphere is the $f$-vector of a balanced simplicial complex.
The combinatorial basis for this work is the study of certain refinements of Eulerian numbers used by Brenti and Welker to describe the $h$-vector of the barycentric subdivision of a boolean complex.
\end{abstract}

\maketitle

\section{Introduction}

The present work can be motivated by Gal's conjecture, a certain strengthening of the Charney-Davis conjecture phrased in terms of the so-called \emph{$\gamma$-vector}.

\begin{conj}[{Gal \cite{Gal}}]\label{conj:Gal}
If $\Delta$ is a flag homology sphere, then $\gamma(\Delta)$ is nonnegative.
\end{conj}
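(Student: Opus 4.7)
The plan is to attempt Conjecture~\ref{conj:Gal} by induction on $d = \dim\Delta$, exploiting the fact that both the flag property and the homology-sphere property are inherited by vertex links: for every vertex $v$, $\mathrm{link}_\Delta(v)$ is again a flag homology sphere of dimension $d-1$, so $\gamma(\mathrm{link}_\Delta(v)) \geq 0$ by the inductive hypothesis. Gal's own theorem handles $d \leq 4$, which serves as the base case.

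For the inductive step, I would fix a vertex $v$ and decompose $\Delta$ as $\mathrm{star}(v) \cup \mathrm{antistar}(v)$, express $h(\Delta;t)$ in terms of the $h$-polynomials of the link, the star, and the antistar, and then re-expand in the symmetric basis $\{t^i(1+t)^{d-2i}\}_{2i\leq d}$ to obtain a recursion for the coefficients $\gamma_i(\Delta)$. The aim is to present this recursion as a nonnegative combination of $\gamma$-entries of proper links, plus a correction that is itself the $f$-vector of an explicit balanced complex built from the faces of $\Delta$---in the spirit of the balanced-complex interpretation that the present paper achieves for barycentric subdivisions. Two alternative routes worth pursuing in parallel are: (i) use the fact that the Stanley--Reisner ring of a flag complex is Koszul, and Gorenstein whenever $\Delta$ is a homology sphere, and try to identify each $\gamma_i$ with the graded dimension of a piece of a Koszul-dual module; and (ii) try to reduce to Karu's theorem on nonnegativity of the cd-index of a Gorenstein* poset by expressing each $\gamma_i$ as a nonnegative integer combination of cd-monomials evaluated on the face poset of $\Delta$.

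The principal obstacle, and the reason Gal's conjecture has resisted proof since its formulation, is that flagness is a global, square-free hypothesis with no obvious local certificate. Every inductive step of the kind sketched above produces intermediate terms that may individually be negative, and their cancellation into a nonnegative $\gamma_i$ appears to require precisely the combinatorial witness the conjecture asks for. I expect the hardest step to be the explicit construction of the balanced complex realizing $\gamma(\Delta)$: the successes known for special classes---nestohedra, Coxeter complexes, and barycentric subdivisions---each rely on extra structure that a general flag sphere lacks, so a genuinely new combinatorial idea will be needed to carry out the recursion above in closed form.
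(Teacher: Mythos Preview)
The statement you are attempting is \emph{Gal's Conjecture}, which the paper states as an open problem (Conjecture~\ref{conj:Gal}) and does \emph{not} prove. There is no proof in the paper to compare your proposal against; the paper's contribution is Theorem~\ref{thm:bary}, which verifies the stronger Conjecture~\ref{conj:flagKrK} only for the special class of barycentric subdivisions of boolean complexes with nonnegative symmetric $h$-vector.

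Your proposal is accordingly not a proof but a research plan, and you candidly acknowledge this yourself: you note that the inductive recursion produces intermediate terms that may individually be negative, that their cancellation ``appears to require precisely the combinatorial witness the conjecture asks for,'' and that ``a genuinely new combinatorial idea will be needed.'' These are honest assessments, but they mean the proposal has a genuine gap at its core rather than a minor detail to fill in. The star/antistar decomposition you sketch does not, as far as is known, yield a manifestly nonnegative expression for $\gamma_i(\Delta)$; the Koszul-duality route has not produced such an interpretation either; and the reduction to Karu's $cd$-index theorem only works when $\Delta$ is an order complex (so that its face poset is Gorenstein$^*$), which is exactly the case already covered by \cite{KaruCD} and does not extend to arbitrary flag spheres. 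In short, the conjecture is open, the paper does not claim otherwise, and your outline correctly identifies why none of the standard attacks close the gap.
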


This conjecture is known to hold for the order complex of a Gorenstein$^*$ poset \cite{KaruCD}, for all Coxeter complexes (see \cite{Stem}, and references therein), and for the (dual simplicial complexes of the) ``chordal nestohedra" of \cite{PRW}---a class containing the associahedron, permutahedron, and other well-studied polytopes.

If $\Delta$ has a nonnegative $\gamma$-vector, one might ask what is enumerated by the entries in the vector. In certain cases (the type $A$ Coxeter complex, for example), the $\gamma$-vector has an explicit combinatorial description.

Recently Nevo and Petersen showed in many cases that not only are the entries in the $\gamma$-vector of a flag homology sphere nonnegative, but they satisfy the Frankl-F{\"u}redi-Kalai inequalities \cite{NP}. In other words, such a $\gamma$-vector is the $f$-vector of a balanced simplicial complex. This suggests a strengthening of Gal's conjecture, given below.

\begin{conj}[Nevo and Petersen \cite{NP}]\label{conj:flagKrK}
If $\Delta$ is a flag homology sphere then $\gamma(\Delta)$ is the $f$-vector of a balanced simplicial complex.
\end{conj}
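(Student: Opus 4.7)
The plan is to establish the special case announced in the abstract — that $\gamma(\sd(\Delta))$ is the $f$-vector of a balanced complex whenever $\Delta$ is a simplicial sphere — rather than the full conjecture, which appears to remain open. The strategy has three stages: obtain an explicit nonnegative combinatorial formula for $\gamma_i(\sd(\Delta))$, identify the objects it counts with the faces of a naturally colored complex, and verify that this complex is in fact simplicial.

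For the first stage, begin from the Brenti--Welker expansion
\[
h(\sd(\Delta);t) \;=\; \sum_{i} h_i(\Delta)\,A_{d,i}(t),
\]
where $A_{d,i}(t)$ is the refined Eulerian polynomial counting permutations of $[d]$ whose descent data match an $i$-face. Substituting the classical peak $\gamma$-decomposition of the Eulerian polynomial — produced by the Foata--Strehl valley-hopping action on $S_d$ — into this identity and reading off $\gamma$-coefficients yields a formula of the shape
\[
\gamma_i(\sd(\Delta)) \;=\; \sum_{F \in \Delta}\,|\F_i(F)|,
\]
in which $\F_i(F)$ is a set of ``peakless'' decorations of the face $F$. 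The point is that both of the two nonnegative decompositions being combined are known explicitly, and the $\gamma$-entries inherit that nonnegativity.

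The second stage is to construct the candidate complex $\B$. Its vertex set is the union of the sets $\F_1(F)$ for $F \in \Delta$, and a collection of vertices is declared a face precisely when the decorations jointly arise from a single chain-plus-peakless-permutation datum. Each vertex is colored by the rank of its underlying face $F$ together with the position of its peak slot; once $\B$ is shown to be a simplicial complex, the number of color classes will equal its dimension and balancedness follows immediately. The $f$-vector identity $f_{i-1}(\B) = \gamma_i(\sd(\Delta))$ then matches the formula above by construction.

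The main obstacle is precisely this last step: closure under taking subsets. Removing a vertex from a decorated datum may destroy the peakless condition, and a priori the residue need not belong to any $\F_j$. The correct remedy, I expect, is to encode faces not by individual permutations but by their Foata--Strehl orbits, so that deleting a vertex corresponds to an explicit local move — collapsing a peak slot or truncating the chain — rather than producing an object outside the family. If such an encoding succeeds, closure becomes combinatorially transparent and the argument reduces to the nonnegative rewriting carried out in the first stage.
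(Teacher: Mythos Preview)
Your target is correct: the statement is a conjecture, and the paper only establishes the barycentric-subdivision case (its Theorem~\ref{thm:bary}). But your route to that case is quite different from the paper's and, as written, has a genuine gap at the very first stage.

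The paper does \emph{not} build an explicit balanced complex whose $f$-vector is $\gamma(\sd(\Delta))$. It works numerically: using the symmetry of $h(\Delta)$ it writes $\gamma(\sd(\Delta))=\sum_{i\le \lfloor n/2\rfloor} h_i(\Delta)\,\gamma^{(n+1,i+1)}$, where $\gamma^{(n+1,j)}$ is the $\gamma$-vector of the \emph{symmetric} restricted Eulerian polynomial $\A_{n+1,j}(t)=A_{n+1,j}(t)+A_{n+1,n+2-j}(t)$. It knows $\gamma^{(n+1,1)}$ is an FFK-vector from \cite{NP}, and then proves by induction on $n$, using recurrences for the $\A_{n,j}$ (Lemma~\ref{lem:rec}), that each remaining $\gamma^{(n+1,j)}$ is ``$d$-good'' or ``$(d+1)$-good'' for $\gamma^{(n+1,1)}$ in the sense of Definition~\ref{def:good}. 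The FFK conclusion then follows from the compressed-complex / coning machinery of Section~\ref{sec:bal} (Lemmas~\ref{lem:cone1}, \ref{lem:cone2}, \ref{lem:dgood}). The paper explicitly remarks (twice) that a combinatorial interpretation of the entries of $\gamma^{(n,j)}$ and an explicit complex $\Gamma(\sd(\Delta))$ with $f(\Gamma)=\gamma(\sd(\Delta))$ would be ``interesting'' --- i.e., it is left open.

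Your first-stage claim is where things break. The Brenti--Welker expansion is $h(\sd(\Delta);t)=\sum_j h_j(\Delta)\,A_{n+1,j+1}(t)$ with $A_{n,j}(t)$ the descent generating function over $\{w\in S_n:w(1)=j\}$. These restricted polynomials are \emph{not} individually symmetric, so they have no $\gamma$-vector on their own; only the paired sums $\A_{n,j}$ do. The Foata--Strehl valley-hopping action produces the $\gamma$-decomposition of the \emph{full} Eulerian polynomial $A_n(t)$, but it does not preserve $w(1)$, so it does not restrict to $S_{n,j}$ and does not hand you a peakless model for $\gamma^{(n,j)}$. Hence the asserted formula $\gamma_i(\sd(\Delta))=\sum_{F\in\Delta}|\F_i(F)|$ does not follow from the ingredients you name; in fact it jumps from an $h$-indexed sum to a face-indexed sum without justification. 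Absent that formula, there is nothing for the complex $\B$ to realize, and the later closure-under-subsets question (which you already flag as unresolved) never gets off the ground. If you want to pursue the explicit-construction route, the real problem to solve first is a valley-hopping--type partition of $S_{n,j}\cup S_{n,n+1-j}$ compatible with the symmetry $w\mapsto (n{+}1{-}w)$; the paper leaves exactly this open.
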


Our main result, stated below, confirms Conjecture~\ref{conj:flagKrK} in the case of the barycentric subdivision of a homology sphere.

\begin{thm}\label{thm:bary}
If $\Delta$ is a boolean complex with a nonnegative and symmetric $h$-vector, then the $\gamma$-vector of the barycentric subdivision of $\Delta$  is the $f$-vector of a balanced simplicial complex.
\end{thm}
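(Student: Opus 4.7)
My plan is to build on the explicit description, due to Brenti and Welker, of $h(\sd(\Delta);t)$ as a nonnegative integer combination of ``local'' Eulerian-type polynomials, and then refine that decomposition to the level of $\gamma$-vectors. Concretely, I would start from a formula of the schematic shape
\[
h(\sd(\Delta);t) \;=\; \sum_{i=0}^{d} h_i(\Delta)\, A_{d,i}(t),
\]
where $d-1 = \dim\Delta$ and $A_{d,i}(t)$ is a generating polynomial tracking permutations of $[d]$ by a suitable descent-type statistic. Since $h(\Delta)$ is symmetric, one can pair the $i$-th and $(d-i)$-th contributions to obtain a palindromic polynomial for each pair, which brings the situation into $\gamma$-territory; the next step is to carry out the $\gamma$-expansion combinatorially, writing
\[
\sum_{i} h_i(\Delta)\, A_{d,i}(t) \;=\; \sum_{j\geq 0} \gamma_j\, t^j (1+t)^{d-2j},
\]
so that each $\gamma_j$ appears as a nonnegative integer sum indexed by distinguished permutations---those avoiding certain descent or peak patterns---weighted by the values $h_i(\Delta)$.

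Once this combinatorial formula for $\gamma(\sd(\Delta))$ is in hand, the remaining task is to realize it as the $f$-vector of a balanced simplicial complex. I would attach to each distinguished permutation a face encoded by its peak set (or an analogous descent-based statistic) and color vertices by position. The key combinatorial check is that the resulting collection of faces is closed under taking subsets and that the position-coloring really is a proper balanced coloring. Assembling one such complex for each pair of indices contributing to the $\gamma$-expansion, then taking $h_i(\Delta)$ disjoint copies and unioning the results, yields a balanced complex whose $f$-vector is $\gamma(\sd(\Delta))$, since disjoint unions preserve balancedness of $f$-vectors.

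The step I expect to be the main obstacle is not the passage to the $\gamma$-basis, which should be a symmetric-function manipulation, but rather the combinatorial upgrade of that identity to a bijection whose output directly supplies face-and-coloring data. In other words, plain $\gamma$-nonnegativity of the $A_{d,i}(t)$ would give Conjecture~\ref{conj:Gal} in this case, but Conjecture~\ref{conj:flagKrK} additionally demands that the positive model organize itself into a simplicial complex with a proper coloring. Producing a statistic on permutations that has exactly the right downward-closure property and whose ``color classes'' are disjoint by construction will be the heart of the argument; the symmetry reduction, the termwise $\gamma$-expansion, and the disjoint-union assembly should then be formal.
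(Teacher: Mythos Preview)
Your setup matches the paper's: start from the Brenti--Welker identity, use the symmetry $h_i=h_{n-i}$ to pair terms, and obtain
\[
\gamma(\sd(\Delta))=\sum_{i=0}^{\lfloor n/2\rfloor} h_i\,\gamma^{(n+1,i+1)},
\]
where $\gamma^{(n,j)}$ is the $\gamma$-vector of the symmetrized restricted Eulerian polynomial $\A_{n,j}(t)$. From here, however, your plan diverges from the paper's and runs into a genuine obstruction.

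The assembly step---realize each $\gamma^{(n+1,i+1)}$ as the $f$-vector of a balanced complex of distinguished permutations, then take $h_i$ disjoint copies---cannot work, because for $j>1$ the vector $(1,\gamma^{(n,j)}_1,\gamma^{(n,j)}_2,\ldots)$ is in general \emph{not} the $f$-vector of any simplicial complex, balanced or otherwise. Concretely, $\gamma^{(5,3)}=(0,4,8)$, and there is no simplicial complex with $4$ vertices and $8$ edges (let alone a bipartite one). So no peak-set or descent-set model on permutations beginning with $3$ or $n{+}1{-}3$ can produce a downward-closed family with those face numbers; the obstruction is numerical, not a matter of finding the right statistic. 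The paper in fact remarks that a combinatorial interpretation of $\gamma^{(n,j)}$ for $j>1$ is not known and would be of independent interest.

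What the paper does instead is more delicate than disjoint union. It shows, by an induction driven by recurrences for $\gamma^{(n,j)}$ and an auxiliary family $\gamma'^{(n,j)}$, that each $\gamma^{(n,j)}$ with $j>1$ is ``good'' for $\gamma^{(n,1)}$: it decomposes as a sum of FFK-vectors each of which is either dominated by $\gamma^{(n,1)}$ or satisfies the stronger inequality $(i{+}1)f_i\le\gamma^{(n,1)}_i$. These conditions allow one to realize each summand as a \emph{subcomplex} of the compressed balanced complex for $\gamma^{(n,1)}$ and then cone over it with a new vertex of a missing color. Coning over subcomplexes, unlike disjoint union, lets you add a vector whose entries are too large relative to its own $f_1$ to be an $f$-vector on its own. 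The crucial numerical input making the induction close is the inequality $(i{+}1)\gamma^{(n,1)}_i\le\gamma^{(n+1,1)}_i$, proved via the explicit complex $\Gamma(n)$ on $\widehat S_n$. Your outline would need to replace the disjoint-union step with a mechanism of this kind.
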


In particular, if $\Delta$ is a homology sphere, then the Dehn-Sommerville relations guarantee that the $h$-vector is symmetric.  Additionally, the fact that $\Delta$ is Cohen-Macaulay means that its $h$-vector is nonnegative \cite{StanleyUBC}. Hence, Theorem \ref{thm:bary} implies the conclusion of Conjecture \ref{conj:flagKrK} in this case.
We remark that the result of Karu \cite{KaruCD} implies the nonnegativity of the $\gamma$-vector for barycentric subdivisions of homology spheres. However, our approach is quite different.

The paper is organized as follows. Section \ref{sec:prelim} provides key definitions. Section \ref{sec:bal} discusses balanced complexes and presents several key lemmas. In Section \ref{sec:poly} we elaborate on work of Brenti and Welker \cite{BW} to lay the combinatorial foundation for our main result. The proof of our main theorem, Theorem \ref{thm:bary}, is carried out in Section \ref{sec:main}. Finally, in Section \ref{sec:implications}, we show how our result has implications for similar results for the $h$- and $g$-vectors. In particular we derive a recent result of Murai \cite{MuraiBr} (who takes an approach similar to ours in building on \cite{BW}) that the $g$-vector of a barycentric subdivision of a homology sphere is the $f$-vector of a simplicial complex.

\section{Preliminaries}\label{sec:prelim}

The faces of a regular cell complex $\Delta$ have a well-defined partial order given by inclusion.  More precisely, if $F$ and $F'$ are open cells in $\Delta$, then $F \leq_{\Delta} F'$ if $F$ is contained in the closure of $F'$. By convention we assume that the empty face $\emptyset$ is contained in every face of higher dimension. A \emph{boolean cell complex} is a regular cell complex in which every lower interval $[\emptyset, F] = \{ G \in \Delta : G \leq F\}$ is isomorphic to a boolean lattice. For example, a simplicial complex is a boolean complex, because the lower interval of a simplex is simply the collection of subsets of the vertices of the simplex.

The \emph{barycentric subdivision} of a boolean complex $\Delta$, denoted $\sd(\Delta)$, is the abstract simplicial complex whose vertex set is identified with the nonempty faces of $\Delta$, and whose $i$-dimensional faces are strictly increasing flags of the form
\[ F_0 <_{\Delta} F_1 <_{\Delta} \cdots <_{\Delta} F_i,\]
where the $F_j$ are nonempty faces in $\Delta$.

The \emph{$f$-polynomial} of a $(d-1)$-dimensional boolean complex $\Delta$ is the generating function for the dimensions of the faces of the complex:
\[ f(\Delta;t) = \sum_{F \in \Delta} t^{1+\dim F} = \sum_{i=0}^d f_i(\Delta) t^i,\]
where the empty face $\emptyset$ has dimension $-1$, and so $f_0 = 1$. The \emph{$f$-vector}
\[ f(\Delta) = (f_0(\Delta), f_1(\Delta),\ldots,f_d(\Delta)) \]
is the sequence of coefficients of the $f$-polynomial.

The \emph{$h$-polynomial} of $\Delta$ is a transformation of the $f$-polynomial:
\[ h(\Delta;t) = (1-t)^d f(\Delta; \frac{t}{1-t}) = \sum_{i=0}^d h_i(\Delta) t^i,\]
and the \emph{$h$-vector} is the corresponding sequence of coefficients
\[h(\Delta) = (h_0(\Delta),h_1(\Delta),\ldots, h_d(\Delta)).\]
Although they contain the same information, the $h$-polynomial is often easier to work with than the $f$-polynomial. For instance, if $\Delta$ is a homology sphere, then the Dehn-Sommerville relations guarantee that the $h$-vector is symmetric; that is, $h_i = h_{d-i}$ for all $0\leq i\leq d$.

Whenever a polynomial $h(t) = \sum_{i=0}^d h_i t^i$ has symmetric integer coefficients, the polynomial has an integer expansion in the basis $\{ t^i(1+t)^{d-2i} : 0\leq i \leq d/2\}$, and we say that it has ``symmetry axis'' at degree $\lfloor d/2 \rfloor$.  In this case, if we write
$$h(t) = \sum_{i=0}^{\lfloor d/2 \rfloor}\gamma_i t^i(1+t)^{d-2i},$$
then we say $\gamma(h(t)) = (\gamma_0, \gamma_1, \ldots, \gamma_{\lfloor d/2 \rfloor})$ is the \emph{$\gamma$-vector} of $h(t)$. In particular if a $(d-1)$-dimensional complex $\Delta$ happens to have a symmetric $h$-vector, then we refer to the $\gamma$-vector of $h(\Delta;t)$ as the $\gamma$-vector of $\Delta$; that is, $\gamma(\Delta)=\gamma(h(\Delta;t))$.

Another invariant that we will study (though not until Section \ref{sec:implications}) is the \emph{$g$-vector} of $\Delta$, $g(\Delta) = (g_0, g_1,\ldots, g_{\lfloor d/2 \rfloor})$, defined by $g_0 = h_0$ and, for  $0\leq i \leq \lfloor d/2\rfloor$, \[ g_i(\Delta) = h_i(\Delta) - h_{i-1}(\Delta).\] The polynomial $g(\Delta;t) = \sum_{i=0}^{\lfloor d/2 \rfloor} g_i t^i$ is the generating function for the $g$-vector.

\section{Balanced complexes}\label{sec:bal}

A boolean complex $\Delta$ on vertex set $V$ is \emph{$d$-colorable} if there is a coloring of its vertices $c: V\to [d]$  such that for every face $F\in \Delta$, the restriction map $c: F\to [d]$ is injective; that is, every face has distinctly colored vertices. Such coloring is called a \emph{proper coloring}. If a $(d-1)$-dimensional complex $\Delta$ is $d$-colorable, then we say that $\Delta$ is \emph{balanced}.

Frankl, F{\"u}redi, and Kalai characterized the $f$-vectors of $d$-colorable simplicial complexes, for any $d$, in terms of certain upper bounds on $f_{i+1}$ in terms of $f_i$ \cite{FFK}. We call these the \emph{Frankl-F{\"u}redi-Kalai inequalities}, and a vector satisfying the inequalities with respect to $d$ is a \emph{$d$-Frankl-F{\"u}redi-Kalai vector}, or \emph{$d$-FFK-vector} for short.  We have no need for the explicit inequalities in the present work, but they can be found in \cite{FFK}.

A key ingredient in \cite{FFK} is the construction of a balanced simplicial complex with a specified $f$-vector. This relies on the use of a colored version of the idea of a \emph{compressed} simplicial complex. (See, for example, \cite[Section 8.5]{Z} for a description of compressed complexes as used in the characterization of $f$-vectors of arbitrary simplicial complexes.)

First, we define a \emph{$d$-colored $k$-subset} of $\mathbb{N}$ to be a set of $k$ positive integers such that no two of them are congruent modulo $d$. Denote by $\binom{\mathbb{N}}{k}_d$ the set of all $d$-colored $k$-subsets. The \emph{color} of $S \in \binom{\mathbb{N}}{k}_d$ is the set of remainders modulo $d$: $\col_d(S) =\{ s \bmod d : s \in S\}$. We list $d$-colored $k$-subsets in reverse lexicographic (``revlex'') order. For example, if $k=3$ and $d=4$, then we have \[ 123 < 124 <134 < 234 < 235 < 245 < 345 < \cdots.\] Notice that the sets $125, 135$, and $145$ are \emph{not} $4$-colored since $1\equiv 5 \bmod 4$.

If $f = (1,f_1,\ldots,f_d)$, we let \[ \F_d(f) = \{ F_{d,k}(j) : 0\leq k \leq d, 1 \leq j \leq f_k\},\] where $F_{d,k}(j)$ denotes the $j$th element in the revlex order on $\binom{\mathbb{N}}{k}_d$. For example, from above we see $F_{4,3}(1) = 123$, $F_{4,3}(2) = 124$, and so on.  In other words, the $(k-1)$-dimensional faces of $\F_d(f)$ are the first $f_k$ of the $d$-colored $k$-subsets of $\mathbb{N}$ in revlex order.

Half of the characterization given in \cite{FFK} states that if $f$ is a $d$-FFK-vector then $\F_d(f)$ is $d$-colorable simplicial complex (thus if $f_d \neq 0$ then $\F_d(f)$ is a balanced simplicial complex).

If $\Delta$ is simplicial complex with $f$-vector $f = (1,f_1,\ldots,f_d)$ then we define the \emph{$d$-compression of $\Delta$} to be $\F_d(f)$. If a $(d-1)$-dimensional simplicial complex $\Delta$ is balanced, its $d$-compression is a balanced simplicial complex, called a \emph{$d$-compressed complex}.

If $\Delta$ and $\Delta'$ are simplicial complexes on disjoint vertex sets, define their \emph{join} to be the complex $\Delta * \Delta' = \{ F \cup G : F \in \Delta, G \in \Delta'\}$. As a special case, the \emph{cone} over $\Delta$ is $\Delta^c = \Delta * \{ v\}$, for some vertex $v \notin \Delta$. It is clear by construction that for $k \geq 1$,
 \[ f_k(\Delta^c) = f_k(\Delta) + f_{k-1}(\Delta).\]

If $f = (1,f_1,\ldots, f_d)$ and $f'=(1,f'_1,\ldots,f'_d)$ are $d$-FFK-vectors such that $f_k \geq f'_k$ for all $k=1,\ldots,d$, then we say $f$ \emph{dominates} $f'$. Clearly if $f$ dominates $f'$ then $\F(f')$ is a subcomplex of $\F(f)$, and, by introducing a new vertex $v$ with color $d+1$, the complex \[\F(f) \cup \F(f')^c = \F(f) \cup \{ F \cup \{v\} : F \in \F(f')\}\] is a $(d+1)$-colorable $d$-dimensional complex with $f$-vector $f+(0,f')$. Applying this argument repeatedly proves the following lemma.

\begin{lem}\label{lem:cone1}
Consider vectors $f = (1,f_1,f_2,\ldots,f_d)$, $f^{(1)}=(1,f^{(1)}_1, f^{(1)}_2,\ldots,f^{(1)}_d)$, $\ldots$, $f^{(k)}=(1,f^{(k)}_1,f^{(k)}_2,\ldots,f^{(k)}_d)$, and require them all to be $d$-FFK-vectors with $f$ dominating $f^{(j)}$ for all $j$. Then \[ f+(0,f^{(1)}+\cdots +f^{(k)}) = \left(1, f_1+ k, f_2+\sum_{j=1}^k f_1^{(j)},\ldots, f_d+\sum_{j=1}^k f_{d-1}^{(j)}, \sum_{j=1}^k f_{d}^{(j)} \right) \] is a $(d+1)$-FFK-vector.
\end{lem}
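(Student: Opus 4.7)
The plan is to iterate the cone construction given in the paragraph immediately preceding the lemma. Starting with the $d$-colorable complex $\F(f)$ (colored by $[d]$), I introduce $k$ new vertices $v_1, \ldots, v_k$, all assigned the new color $d+1$, and form
\[ \Gamma \;=\; \F(f) \;\cup\; \bigcup_{j=1}^{k} \bigl(\F(f^{(j)}) * \{v_j\}\bigr). \]
Because $f$ dominates each $f^{(j)}$, revlex order makes $\F(f^{(j)})$ a subcomplex of $\F(f)$, so every face of the form $F \cup \{v_j\}$ has $F \in \F(f)$, and $\Gamma$ is a well-defined simplicial complex (closed under taking subsets, since subsets of $F \cup \{v_j\}$ are either subsets of $F \in \F(f)$ or of the form $G \cup \{v_j\}$ with $G \in \F(f^{(j)})$).

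Next I would verify that $\Gamma$ is $(d+1)$-colorable: extend the $d$-coloring of $\F(f)$ by setting $c(v_j) = d+1$ for every $j$. The only faces containing any $v_j$ are those in $\F(f^{(j)}) * \{v_j\}$, so distinct $v_j$ and $v_{j'}$ never share a face, and within each such face only one vertex carries color $d+1$ while the remaining vertices keep their colors in $[d]$. Hence the coloring is proper.

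It remains to count the $f$-vector of $\Gamma$. The complex $\F(f)$ contributes $f_i$ faces of cardinality $i$. The cone over $\F(f^{(j)})$ with apex $v_j$ contributes one new vertex $v_j$ together with one new $i$-subset $F \cup \{v_j\}$ for each $(i-1)$-face $F$ of $\F(f^{(j)})$, i.e.\ $f^{(j)}_{i-1}$ new $i$-subsets. Summing over $j$ and adding to $f$ gives exactly
\[ \left(1,\; f_1 + k,\; f_2 + \sum_{j=1}^{k} f_1^{(j)},\; \ldots,\; f_d + \sum_{j=1}^{k} f_{d-1}^{(j)},\; \sum_{j=1}^{k} f_d^{(j)}\right), \]
matching the vector in the lemma. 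Since $\Gamma$ is a $(d+1)$-colorable simplicial complex with this $f$-vector, the FFK characterization recalled earlier in the section yields the desired conclusion that this vector is a $(d+1)$-FFK-vector.

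There is no serious obstacle here; the proof is essentially a straightforward iteration of the $k=1$ construction spelled out in the paragraph before the lemma (the paper already signals this with the phrase ``applying this argument repeatedly''). The one small point needing care is that the same color $d+1$ is reused for all $k$ cone apices, which is legitimate precisely because the $v_j$ live in disjoint cones and never share a face.
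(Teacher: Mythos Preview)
Your proof is correct and follows precisely the approach the paper sketches in the paragraph before the lemma (``applying this argument repeatedly''): you build the same complex $\F(f)\cup\bigcup_j \F(f^{(j)})^c$ with all cone apices sharing the new color $d+1$, and you make explicit the details the paper leaves to the reader. The observation that the apices $v_j$ never share a face, so reusing color $d+1$ is legitimate, is exactly the point that makes the iteration work.
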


Similarly, suppose $\Delta$ is a balanced $(d-1)$-dimensional complex on vertex set $V$ with a proper coloring $c:V\rightarrow [d]$, and let $\Delta'$  be a subcomplex of dimension $(d-2)$, on vertex set $V'$, such that $c(V')\neq [d]$. Let $f = f(\Delta) = (1,f_1,\ldots,f_d)$ and $f'= f(\Delta') = (1,f'_1,\ldots,f'_{d-1})$. Then by coning over $\Delta'$ with a new vertex $v$ colored by the element in $[d]\setminus c(V')$, we see that $\Delta \cup (\Delta')^c$ is a balanced $(d-1)$-dimensional complex with $f$-vector $f+(0,f')$. Of course, we can do this for any number of such subcomplexes $\Delta'$, and so we have the following lemma.

\begin{lem}\label{lem:cone2}
Let $\Delta$ be a balanced $(d-1)$-dimensional complex on vertex set $V$ with a proper coloring $c:V\rightarrow [d]$.  Furthermore, let $\Delta$ have (not necessarily distinct) subcomplexes $\Delta^{(1)}, \ldots, \Delta^{(k)}$, each of dimension at most $d-2$, on vertex sets $V^{(1)},\ldots,V^{(k)}$ respectively, such that for every $i$, we have $c(V^{(i)})\neq [d]$. Let $f =f(\Delta)$, and $f^{(1)}=f(\Delta^{(1)})$,  $\ldots, f^{(k)}=f(\Delta^{(k)})$. Then $\Delta \bigcup_{j=1}^k (\Delta^{(j)})^c$ is a balanced $(d-1)$-dimensional complex with $f$-vector \[ f+ (0,f^{(1)}+\cdots + f^{(k)}). \]
\end{lem}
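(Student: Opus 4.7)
The plan is to construct the claimed complex explicitly by generalizing the single-cone argument discussed just before the statement, and then verify directly that it is balanced and that its $f$-vector is as stated. The hypothesis $c(V^{(j)})\neq[d]$ ensures that for each $j$ the ``missing color'' set $[d]\setminus c(V^{(j)})$ is nonempty, which will give us enough room to cone each $\Delta^{(j)}$ by a fresh apex of an allowed color.

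First, for each $j\in\{1,\ldots,k\}$, choose a color $a_j\in[d]\setminus c(V^{(j)})$ and introduce a fresh vertex $v_j$, taking $v_1,\ldots,v_k$ pairwise distinct and disjoint from $V$. Extend the coloring by setting $c(v_j):=a_j$, and form
\[ \Gamma \;:=\; \Delta \,\cup\, \bigcup_{j=1}^k (\Delta^{(j)})^c, \]
where each cone $(\Delta^{(j)})^c = \Delta^{(j)} * \{v_j\}$ uses $v_j$ as its apex.

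Next I would verify that $c$ is a proper $d$-coloring of $\Gamma$. Every face either lies in $\Delta$ (already properly colored) or has the form $F\cup\{v_j\}$ with $F\in\Delta^{(j)}$; in the latter case $c(F)\subseteq c(V^{(j)})$, which misses $a_j=c(v_j)$, so $c$ is injective on $F\cup\{v_j\}$. Since each $v_j$ appears only in faces of $(\Delta^{(j)})^c$, no face contains two distinct apices, so no new color conflict can occur across different cones. The dimension of $\Gamma$ is at most $\max\bigl(d-1,\,\dim\Delta^{(j)}+1\bigr)\le d-1$, and $\Delta\subseteq\Gamma$ is already of dimension $d-1$, so $\Gamma$ is a balanced $(d-1)$-dimensional complex.

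Finally, I would count faces. The $(i-1)$-dimensional faces of $\Gamma$ decompose into the $(i-1)$-faces of $\Delta$ and, for each $j$, the new faces of the form $F\cup\{v_j\}$ with $F$ an $(i-2)$-face of $\Delta^{(j)}$. These new faces lie outside $\Delta$ (as $v_j\notin V$) and are disjoint across different $j$ (as they contain distinct apices), giving
\[ f_i(\Gamma) \;=\; f_i \,+\, \sum_{j=1}^k f^{(j)}_{i-1} \qquad (i\ge 1), \]
with $f_0(\Gamma)=1$, which is exactly $f+(0,\,f^{(1)}+\cdots+f^{(k)})$. There is no real obstacle here; the one thing requiring care is to use pairwise distinct apices $v_j$, so that cones do not share vertices, which both prevents color conflicts between different cones and makes the $f$-vector count additive in $j$.
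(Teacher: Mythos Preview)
Your argument is correct and is exactly the paper's approach: the paper establishes the single-cone case in the paragraph preceding the lemma and then asserts the general statement by iterating, while you have simply written out that iteration explicitly with distinct apices $v_1,\ldots,v_k$. The key points you identify---choosing $a_j\in[d]\setminus c(V^{(j)})$, using fresh pairwise-distinct apices so that cones are disjoint outside $\Delta$, and the resulting additivity of the $f$-vector---are precisely what the paper has in mind.
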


Using the idea of $d$-compression, we will now exhibit a sufficient condition for the existence of a collection of subcomplexes as in Lemma \ref{lem:cone2}.

There is a natural embedding $\phi_d$ of the revlex order on $\binom{\mathbb{N}}{k}_{d-1}$ in the revlex order on $\binom{\mathbb{N}}{k}_d$, defined as follows.  For $s = (d-1)i + j \in \mathbb{N}$ with $0\leq i$ and $1\leq j\leq d-1$, let $\phi_d(s) = s+i = di+j$. Then if $S$ is a $(d-1)$-colored $k$-subset, let $\phi_d(S) = \{ \phi_d(s) : s \in S\}$. Observe that the color of $S$ is preserved under $\phi_d$; that is, $\col_{d-1}(S) = \col_d(\phi_d(S))$. Moreover, if $S \leq T$ in $\binom{\mathbb{N}}{k}_{d-1}$, then $\phi_d(S) \leq \phi_d(T)$ in $\binom{\mathbb{N}}{k}_d$.

Define the function $r_{d,k}(a)=b$ as the position that the $a$th element in  $\binom{\mathbb{N}}{k}_{d-1}$ gets mapped to in $\binom{\mathbb{N}}{k}_d$. That is, $\phi_d(F_{d-1,k}(a)) = F_{d,k}(b)$.

\begin{prp}\label{prp:ineq}
For all $a,k,d\in \mathbb{N}$ such that $k<d$, we have $r_{d,k}(a) \leq (k+1)a$.
\end{prp}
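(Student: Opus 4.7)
The plan is to decompose the $d$-colored $k$-subsets $V \leq T := \phi_d(S)$ according to whether $V$ contains a multiple of $d$. Since $V$ is $d$-colored, its elements have distinct residues modulo $d$, so $V$ contains at most one multiple of $d$. Call $V$ of \emph{type A} if it contains no multiple of $d$, and of \emph{type B} otherwise. Because $\phi_d$ is an order-preserving, residue-preserving bijection from $\mathbb{N}$ onto $\mathbb{N}\setminus d\mathbb{N}$, the type A subsets $V \leq T$ correspond under $\phi_d^{-1}$ bijectively to the $(d-1)$-colored $k$-subsets $R \leq S$, of which there are exactly $a$. Hence the proposition reduces to showing that the number of type B subsets $V \leq T$ is at most $ka$.

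I will prove the following stronger cumulative statement: in any initial segment of $d$-colored $k$-subsets under the revlex order, the number of type B subsets is at most $k$ times the number of type A subsets. Applied to the initial segment ending at $T$, this yields $r_{d,k}(a) \leq a + ka = (k+1)a$, as required.

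The stronger statement is proved by induction along the revlex order. The step is immediate when the new subset is of type A; the delicate case is adding a type B subset, for which I need the strict inequality $|B|_{N-1} < k|A|_{N-1}$ just before the addition. The crux is the computation at ``boundary'' positions---the last $d$-colored $k$-subset whose maximum element equals some multiple $dq$ of $d$. Via the $\phi_d$-bijection, these boundaries satisfy
\[
|A| = \binom{d-1}{k} q^k, \qquad |B| = q \cdot \binom{d-1}{k-1} q^{k-1} = \binom{d-1}{k-1} q^k,
\]
so the ratio equals $\binom{d-1}{k-1}/\binom{d-1}{k} = k/(d-k) \leq k$ (using $k<d$), with equality precisely when $d = k+1$. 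Moreover, the $d$-colored $k$-subset immediately after such a boundary in revlex is $\{2, 3, \ldots, k, dq+1\}$, whose elements are all non-multiples of $d$ (here $k < d$ is essential), so it is of type A.

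Thus at each position where the ratio $|B|/|A|$ reaches $k$, the next subset in revlex is of type A, which prevents any violation of the bound. The main obstacle that remains is to verify the ratio does not reach $k$ at positions strictly between consecutive ``max-of-$dq$'' boundaries---that is, within each ``super-block'' of maxima in $\{dq+1, dq+2, \ldots, d(q+1)\}$. I anticipate this will follow from a recursive analysis: within the sub-block of fixed maximum $n = dq + l$ (for $1 \leq l \leq d-1$), the relevant subsets are parameterized by $(k-1)$-subsets of $\{1,\ldots, n-1\}$ with residues avoiding $\{l \bmod d\}$, and the analogous type A/type B ratio bound at the lower level of $k$ yields what is needed.
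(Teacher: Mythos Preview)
Your setup coincides with the paper's: the decomposition into type A (no element $\equiv 0 \pmod d$) and type B (one element $\equiv 0 \pmod d$) is exactly the paper's partition into the sets $A$ and $B$, and your identification $|A|=a$ via $\phi_d^{-1}$ is the same. The divergence is in how to bound $|B|$.

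The paper does \emph{not} induct along the revlex order. It writes down a single explicit map $\pi:B\to A$: given $T\in B$ with $r=di\in T$, replace $r$ by $r'=d(i-1)+j$ for the largest $j\in\{1,\dots,d-1\}$ with $j\notin\col_d(T)$. One checks $\pi(T)<T$, so $\pi(T)\in A$; and for fixed $S\in A$ a preimage is obtained by swapping some element of $S$ for the unique multiple of $d$ in the next block of $d$ consecutive integers, so $|\pi^{-1}(S)|\le |S|=k$. This gives $|B|\le k|A|$ in one stroke.

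Your route, by contrast, is not yet a proof. You explicitly leave open the ``main obstacle'': that in each super-block (maxima in $\{dq+1,\dots,d(q+1)\}$) the running ratio $|B|/|A|$ never exceeds $k$. Your boundary computation is correct, and it is true that the element right after the $dq$-boundary is of type A; but this alone does not preclude the ratio overshooting $k$ somewhere in the interior of the next super-block before settling back to $k/(d-k)\cdot(\text{something})$ at the next boundary. In the extremal case $d=k+1$ every boundary already has $|B|=k|A|$, so you would need to show that within each super-block the type A elements always arrive ``in time''. The recursive analysis you allude to (fixing the maximum $n=dq+l$ and reducing to a $(k-1)$-problem) is plausible but not spelled out, and it is not obvious that the induction on $k$ interacts cleanly with the revlex induction on position. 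As written this is a sketch with a real gap; the paper's $\pi$-map sidesteps all of this in three lines, and I recommend you adopt it.
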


\begin{proof}
Let $b=r_{d,k}(a)$, and let $A = \{ S : S \leq F_{d,k}(b), d \notin \col_d(S)\}$ and $B = \{ T : T < F_{d,k}(b), d \in \col_d(T) \}$. Notice $A = \{ \phi_d(S) : S \leq F_{d-1,k}(a)\}$, and so $|A| = a$.

Define $\pi: B \to A$ by
\[\pi(T) = (T\setminus \{ r\}) \cup \{r'\},\]
where $r$ is the element in $T$ of the form $r = di$ and $r' = d(i-1)+j$ for the greatest $j$ such that $1\leq j\leq d-1$ and $j \notin \col_d(T)$.  Since $k \leq d-1$, there is always some color not in $\col_d(T)$. By construction, $\pi(T) < T$.  Thus $\pi(T) \in A$ and $\pi$ is well-defined. For example, with $d=5$, $\pi(\{ 1,4,5,8\}) = \{ 1,2,4,8\}$.

Now for any element $S \in A$ consider $T \in \pi^{-1}(S)$. We obtain such a $T$ by replacing $s$ in $S$ with a number $r$ such that $s <r <s+d$ and $r$ is a multiple of $d$. Since there is at most one such $r$ for each element of $S$, we have $|\pi^{-1}(S)|\leq |S|=k$.

Now, \[ r_{d,k}(a) = |A|+|B| \leq |A| + k|A| = (k+1)a,\] as desired.
\end{proof}

We now have the following corollary.

\begin{cor}
Let $f = (1,f_1,\ldots,f_{d-1})$ be a $(d-1)$-FFK-vector and $f'=(1,f'_1,\ldots,f'_d)$ a $d$-FFK-vector satisfying $(k+1)f_k \leq f'_k$ for $1\leq k \leq d-1$. Then there is a color-preserving isomorphism between $\F_{d-1}(f)$ and a subcomplex of $\F_d(f')$.
\end{cor}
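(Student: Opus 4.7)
The plan is to prove the corollary as essentially a direct consequence of Proposition \ref{prp:ineq}, by exhibiting the desired embedding explicitly as the one induced by the vertex-level map $\phi_d$ already introduced above the proposition. Concretely, I will define $\Phi : \F_{d-1}(f) \to \F_d(f')$ on faces by $\Phi(S) = \{\phi_d(s) : s \in S\}$ and then check that $\Phi$ is well-defined, injective, simplicial, and color-preserving.

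The crux of the argument is verifying that $\Phi$ sends every face of $\F_{d-1}(f)$ into $\F_d(f')$. Given a $(k-1)$-face $S$ of $\F_{d-1}(f)$ with $1 \le k \le d-1$, write $S = F_{d-1,k}(a)$ for some $1 \le a \le f_k$. By the definition of $r_{d,k}$, we have $\Phi(S) = \phi_d(F_{d-1,k}(a)) = F_{d,k}(r_{d,k}(a))$, so it suffices to bound its revlex position. Proposition \ref{prp:ineq} together with the hypothesis $(k+1)f_k \le f'_k$ gives
\[
r_{d,k}(a) \;\le\; (k+1)a \;\le\; (k+1)f_k \;\le\; f'_k,
\]
so $\Phi(S)$ is a $(k-1)$-face of $\F_d(f')$. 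This is the only step that uses any real combinatorial content.

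The remaining checks are immediate from properties of $\phi_d$ already recorded in the text. Since $\phi_d : \mathbb{N} \to \mathbb{N}$ is strictly increasing, it is injective, and because $\Phi$ is applied elementwise, $T \subseteq S$ implies $\Phi(T) \subseteq \Phi(S)$; hence $\Phi$ is an injective simplicial map, and its image is automatically a subcomplex of $\F_d(f')$. Color-preservation is precisely the identity $\col_{d-1}(S) = \col_d(\phi_d(S))$ observed before Proposition \ref{prp:ineq}. I do not anticipate any serious obstacle: all the combinatorial work was done in Proposition \ref{prp:ineq}, and the only place one could trip up is in correctly identifying $\Phi(S)$ as $F_{d,k}(r_{d,k}(a))$, which is literally the definition of $r_{d,k}$.
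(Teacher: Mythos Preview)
Your proof is correct and follows essentially the same approach as the paper's: both use the map $\phi_d$ as the embedding, invoke Proposition~\ref{prp:ineq} to bound revlex positions so that each $\phi_d(S)$ lands in $\F_d(f')$, and cite the already-noted color-preservation $\col_{d-1}(S)=\col_d(\phi_d(S))$. The only minor difference is that the paper applies Proposition~\ref{prp:ineq} once at $a=f_k$ (implicitly using that $\phi_d$ preserves revlex order to handle smaller $a$), whereas you apply it directly for each $a\le f_k$; both are fine.
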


\begin{proof}
By Proposition \ref{prp:ineq}, we have that $r_{d,k}(f_k) \leq (k+1)f_k \leq f'_k$.  Thus, from the definition of $d$-compressed complexes (and the function $r_{d,k}(i)$), if $S \in \F_{d-1}(f)$, then $\phi_d(S) \in \F_d(f')$. That is, $\phi_d(\F_{d-1}(f))$ is a subcomplex of $\F_d(f')$. As $\col_{d-1}(S) = \col_d(\phi_d(S))$, we have that $\F_{d-1}(f) \to \phi_d(\F_{d-1}(f))$ is a color-preserving isomorphism of balanced simplicial complexes, yielding the desired result: $\F_{d-1}(f) \cong \phi_d(\F_{d-1}(f)) \subset \F_d(f')$.
\end{proof}

Repeatedly using this fact, along with Lemma \ref{lem:cone2}, yields the following companion to Lemma \ref{lem:cone1}.

\begin{lem}\label{lem:dgood}
Let $f = (1,f_1,f_2,\ldots,f_d)$ be a $d$-FFK-vector, and $f^{(1)}=(1,f^{(1)}_1, f^{(1)}_2,\ldots,f^{(1)}_{d-1})$, $\ldots,$
$f^{(k)}=(1,f^{(k)}_1,f^{(k)}_2,\ldots,f^{(k)}_{d-1})$ be $(d-1)$-FFK-vectors such that $f_i \geq (i+1)f^{(j)}_i$ for all $i$ and all $j$. Then \[ f+(0,f^{(1)}+\cdots +f^{(k)}) = \left(1, f_1+ k, f_2+\sum_{j=1}^k f_1^{(j)},\ldots, f_d+ \sum_{j=1}^k f_{d-1}^{(j)}\right) \] is a $d$-FFK-vector.
\end{lem}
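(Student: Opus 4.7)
The plan is to realize the vector in question as the $f$-vector of an explicit balanced simplicial complex, and then to invoke the converse half of the Frankl--F\"uredi--Kalai characterization, which says that the $f$-vector of any $d$-colorable $(d-1)$-dimensional complex is a $d$-FFK-vector.

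First I would take $\Delta = \F_d(f)$; by the half of the FFK characterization recalled in this section, $\Delta$ is a balanced $(d-1)$-dimensional complex with $f$-vector $f$. For each $j = 1, \ldots, k$, form the $(d-1)$-compression $\F_{d-1}(f^{(j)})$. The numerical hypothesis $f_i \geq (i+1)f_i^{(j)}$ is exactly the hypothesis $(i+1) f_i \leq f'_i$ of the preceding Corollary (with $f^{(j)}$ playing the role of the smaller $(d-1)$-FFK-vector and $f$ playing the role of the larger $d$-FFK-vector). Hence the Corollary provides a color-preserving embedding of $\F_{d-1}(f^{(j)})$ as a subcomplex $\Delta^{(j)} \subseteq \Delta$ of dimension at most $d-2$.

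Next I would verify the hypotheses of Lemma \ref{lem:cone2} for the collection $\Delta^{(1)}, \ldots, \Delta^{(k)}$. Since $\F_{d-1}(f^{(j)})$ uses only colors in $\{1, \ldots, d-1\}$ and $\phi_d$ preserves colors, each $\Delta^{(j)}$ avoids color $d$; together with the dimension bound this matches the hypotheses of Lemma \ref{lem:cone2}. Applying that lemma---coning each $\Delta^{(j)}$ with a fresh vertex colored $d$---yields a balanced $(d-1)$-dimensional complex whose $f$-vector is precisely
\[ f + (0, f^{(1)} + \cdots + f^{(k)}). \]
The converse direction of the FFK characterization then gives that this vector is a $d$-FFK-vector, proving the lemma.

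The argument is essentially an assembly of the Corollary (which handles the color-preserving embedding) with Lemma \ref{lem:cone2} (which handles the coning). The only conceptual input is the numerical comparison $r_{d,k}(a) \leq (k+1)a$ established in Proposition \ref{prp:ineq}, which is already in hand. I do not anticipate a serious obstacle in this step; it is primarily a matter of correctly matching the hypotheses of the two cited results.
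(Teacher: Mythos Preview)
Your proposal is correct and matches the paper's approach exactly: the paper states that Lemma~\ref{lem:dgood} follows by repeatedly applying the Corollary (to embed each $\F_{d-1}(f^{(j)})$ color-preservingly into $\F_d(f)$) together with Lemma~\ref{lem:cone2} (to cone over each embedded subcomplex with a fresh vertex of the missing color). The only cosmetic point is that in the paper's conventions the missing color is the residue $0 \pmod d$ rather than ``color $d$,'' but this does not affect the argument.
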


Let us now define terms for $f$-vectors satisfying the conditions of these lemmas.

\begin{defn}\label{def:good}
Let $f=(1,f_1,\ldots,f_d)$ be the $f$-vector of a $(d-1)$-dimensional balanced complex.
\begin{enumerate}
\item \textbf{$\bm{(d+1)}$-good}. As in Lemma \ref{lem:cone1}, let $g=(g_1,\ldots,g_d,g_{d+1})$ be a sum of $d$-FFK-vectors, each of which is dominated by $f$.
    Some, but not all, of these $d$-FFK-vectors may be shorter than $f$; that is, $g_{d+1}\neq 0$.
    Then we say that $(0,g)$ is \emph{$(d+1)$-good} for $f$. Note that $f+(0,g)$ is a $(d+1)$-FFK-vector.
\item \textbf{$\bm{d}$-good}. Let $g=(g_1,\ldots,g_d) = f^{(1)}+\cdots+f^{(k)}$, with $g_d \neq 0$, be a sum of $(d-1)$-FFK-vectors such that $f_i \geq (i+1)f_i^{(j)}$ for all $i$ and all $j$.
    Then we say that $(0,g)$ is \emph{$d$-good} for $f$. Note that $f+(0,g)$ is a $d$-FFK-vector.
\end{enumerate}
\end{defn}

The following is immediate from the definitions.

\begin{obs}\label{obs:dd+1}
Let $f=(1,f_1,\ldots,f_d)$ be the $f$-vector of a $(d-1)$-dimensional balanced complex.
If $(0,g)$ is $d$-good for $f$ and $(0,g')$ is $(d+1)$-good for $f$, then
$(0,g+g')$ is $(d+1)$-good for $f$. Recall that
$f+(0,g+g')$ is a $(d+1)$-FFK-vector.
\end{obs}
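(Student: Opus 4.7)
The plan is to verify the observation directly from Definition 3.5. Let me unpack what the two hypotheses give us. Since $(0,g)$ is $d$-good for $f$, we can write $g = f^{(1)} + \cdots + f^{(k)}$ where each $f^{(j)}$ is a $(d-1)$-FFK-vector with $f_i \geq (i+1) f^{(j)}_i$ for all $i$ and $j$. Since $(0,g')$ is $(d+1)$-good for $f$, we can write $g' = h^{(1)} + \cdots + h^{(m)}$ where each $h^{(\ell)}$ is a $d$-FFK-vector dominated by $f$, and $g'_{d+1} \neq 0$. The goal is to exhibit $g + g'$ as a sum of $d$-FFK-vectors each dominated by $f$, such that the resulting last coordinate is nonzero.

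The key step is to promote each $(d-1)$-FFK-vector $f^{(j)}$ to a $d$-FFK-vector by appending a zero in position $d$, writing $\tilde f^{(j)} = (1, f^{(j)}_1, \ldots, f^{(j)}_{d-1}, 0)$. This is legitimate because any $(d-1)$-colorable simplicial complex is trivially $d$-colorable (use any subset of $d-1$ of the $d$ colors), so the $f$-vector of the realizing complex $\F_{d-1}(f^{(j)})$ is also a $d$-FFK-vector when padded with a trailing zero. One then checks that $\tilde f^{(j)}$ is dominated by $f$: for $1 \leq i \leq d-1$ we have $f_i \geq (i+1) f^{(j)}_i \geq f^{(j)}_i$, and $f_d \geq 0 = \tilde f^{(j)}_d$.

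With this identification, we have
\[ g + g' = \tilde f^{(1)} + \cdots + \tilde f^{(k)} + h^{(1)} + \cdots + h^{(m)}, \]
a sum of $d$-FFK-vectors each dominated by $f$. The $(d+1)$-st coordinate of this sum equals $g'_{d+1} \neq 0$, so not all summands are shorter than $f$. This is exactly the condition of Definition 3.5(1), so $(0, g+g')$ is $(d+1)$-good for $f$, and Lemma 3.1 then ensures that $f + (0, g+g')$ is a $(d+1)$-FFK-vector.

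I expect no real obstacle here, which is consistent with the author's remark that the statement is immediate from the definitions. The only point worth articulating is the implicit convention that padding a $(d-1)$-FFK-vector by a zero yields a $d$-FFK-vector; once this is acknowledged, the computation of $g+g'$ and the verification of the domination conditions are mechanical.
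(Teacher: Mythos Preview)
Your proof is correct and is exactly the natural verification the paper has in mind; the paper itself offers no argument beyond ``immediate from the definitions.'' The only substantive point, which you identify, is that a $(d-1)$-FFK-vector padded by a trailing zero is a $d$-FFK-vector dominated by $f$, after which the decomposition of $g+g'$ into dominated $d$-FFK-vectors is immediate.
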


\section{Restricted Eulerian polynomials}\label{sec:poly}

In \cite{BW}, Brenti and Welker provide, among other things, explicit combinatorial formulas relating $h(\Delta)$ and $h(\sd(\Delta))$, which we now describe.

Let $S_n$ denote the symmetric group on $\{1,2,\ldots,n\}$.  For a permutation $w \in S_n$, the \emph{descent number} of $w$ is defined as
$$d(w) = |\{i : w(i) > w(i+1)\}|.$$
Let $A(n,i,j)$ denote the number of permutations $w \in S_n$ such that $w(1) =j$ and $d(w) = i$.

\begin{thm}[{Brenti and Welker \cite[Theorem 2.2]{BW}}]\label{thm:BW}
Let $\Delta$ be a $(n-1)$-dimensional boolean complex. Then for $0\leq i \leq n$,
\begin{equation}\label{eqn:bw}
h_i(\sd(\Delta)) = \sum_{j=0}^n A(n+1,i,j+1) h_j(\Delta).
\end{equation}
\end{thm}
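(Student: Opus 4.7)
The plan is to work first at the $f$-polynomial level, enumerate faces of $\sd(\Delta)$ via ordered set partitions of faces of $\Delta$, and then convert to the $h$-polynomial identity using a refinement of the classical Eulerian--Stirling identity.

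First, I would decompose a face of $\sd(\Delta)$ by its top element. A chain $F_1 <_{\Delta} \cdots <_{\Delta} F_i$ with $F_i = F$ and $|F| = k$ is determined by, and determines, the ordered set partition $(F_1,\,F_2\setminus F_1,\,\ldots,\,F_i\setminus F_{i-1})$ of $F$ into $i$ nonempty blocks; the hypothesis that $\Delta$ is boolean is precisely what guarantees that every such chain actually lies in $\Delta$. Since ordered set partitions of a $k$-set into $i$ nonempty blocks are counted by $i!\,S(k,i)$ (Stirling numbers of the second kind), summing over top faces gives, for $i \ge 1$,
\[ f_i(\sd(\Delta)) \;=\; \sum_{k=i}^n i!\,S(k,i)\,f_k(\Delta). \]

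Next, I would translate this to an identity of $h$-polynomials. Substituting into $h(\sd(\Delta); t) = (1-t)^n f(\sd(\Delta); t/(1-t))$, expressing $f_k(\Delta)$ in terms of the $h_j(\Delta)$ via the inverse of the usual $h\mapsto f$ transformation, and collecting the coefficient of $h_j(\Delta)\,t^i$, the theorem reduces to a single polynomial identity in $t$: for each fixed $n$ and $j$, a suitable $(1-t)^n$-weighted sum of the Stirling expression above must equal $\sum_{i\ge 0} A(n+1,i,j+1)\,t^i$. This is a refinement, tracking the value of $w(1)$, of the classical formula expressing the Eulerian polynomial $\sum_w t^{d(w)}$ as $\sum_k k!\,S(n,k)\,t^k(1-t)^{n-k}$, which itself underlies the standard identity $h(\sd(\sigma);t) = \sum_{w \in S_{k}} t^{d(w)}$ for a single $(k-1)$-simplex $\sigma$.

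The main obstacle is establishing this refined Eulerian--Stirling identity. A clean route is combinatorial: construct a statistic-preserving bijection between pairs consisting of an ordered set partition of some subset of $[n+1]$ together with a distinguished ``tag'' recording the value $j+1$, and permutations $w \in S_{n+1}$ with $w(1) = j+1$, such that the descent count of $w$ matches the number of blocks of the partition up to a fixed offset. A direct inductive alternative is to verify that both sides satisfy the same recurrence in $n$, peeling off the letter $n+1$ from a permutation in $S_{n+1}$ and tracking what happens to both the first letter and to the descent count. Once the refined identity is in hand, the Brenti--Welker formula drops out by coefficient extraction.
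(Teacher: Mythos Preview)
The paper does not prove this statement: Theorem~\ref{thm:BW} is quoted from Brenti and Welker \cite[Theorem~2.2]{BW} and used as a black box, so there is no in-paper proof to compare against.

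That said, your outline is essentially the argument Brenti and Welker give. The identity $f_i(\sd(\Delta)) = \sum_{k} i!\,S(k,i)\,f_k(\Delta)$ via ordered set partitions of the top face is exactly their first step (and your remark that the boolean hypothesis is precisely what makes every such chain live in $\Delta$ is the right point). The passage to $h$-coefficients then reduces the theorem to showing that a certain alternating Stirling--binomial sum equals $A(n+1,i,j+1)$, which is your ``refined Eulerian--Stirling identity.'' One efficient way to finish, slightly cleaner than the bijection you sketch, is to observe that both sides of \eqref{eqn:bw} are linear in $(h_0(\Delta),\ldots,h_n(\Delta))$, so it suffices to verify the formula on a spanning family of boolean complexes; taking $\Delta$ to be the $(k-1)$-simplex for $k=0,\ldots,n$ (padded to dimension $n-1$) reduces everything to the classical identity $h(\sd(\sigma^{k-1});t)=A_k(t)$ together with the elementary recursion for $A(n,i,j)$ recorded in the paper as Observation~\ref{obs:rec}. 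Your inductive alternative, peeling off the letter $n+1$ and tracking $w(1)$ and $d(w)$, is exactly that recursion, so either route closes the gap.
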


We shall return to this characterization of the $h$-vector of $\sd(\Delta)$ in Section \ref{sec:main}. First we will investigate various properties of the coefficients $A(n,i,j)$ and related generating functions.

Let
$$S_{n,j} = \{ w \in S_n : w(1) = j\},$$
and define the \emph{restricted Eulerian polynomials} as the descent generating functions for these subsets of $S_n$:
\[ A_{n,j}(t) = \sum_{w \in S_{n,j} } t^{d(w)} = \sum_{i=0}^{n-1} A(n,i,j) t^i.\]
Note that the usual Eulerian polynomial is the descent generating function for all of $S_n$: \[ A_n(t) =\sum_{w\in S_n} t^{d(w)} = \sum_{j=1}^n A_{n,j}(t).\]

For example,
\begin{align*}
A_{4,1}(t) &= 1 + 4t + t^2,\\
A_{4,2}(t) &= 4t + 2t^2,\\
A_{4,3}(t) &= 2t+4t^2, \\
A_{4,4}(t) &=  t + 4t^2 + t^3,
\end{align*}
and $A_4(t) =1 + 11t + 11t^2 + t^3$.

The following observation is immediate by considering that if $w(1)=1$ there is never a descent in the first position, and if $w(1) =n$ there is always a descent in the first position.

\begin{obs}\label{obs:eul}
We have $A_{n,1}(t) = A_{n-1}(t)$ and $A_{n,n}(t) = tA_{n-1}(t)$.
\end{obs}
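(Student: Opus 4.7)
The plan is to prove both identities by the obvious bijection that deletes the first entry of a permutation and standardizes the remaining word, tracking exactly how this operation affects the descent number.

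For the first identity, I would take $w\in S_{n,1}$, so $w(1)=1$, and let $w'\in S_{n-1}$ be the permutation obtained by deleting $w(1)$ and subtracting $1$ from every remaining entry. This is clearly a bijection $S_{n,1}\to S_{n-1}$. Since $w(1)=1<w(2)$, position $1$ is never a descent, so $d(w)$ equals the number of descents of $w(2)w(3)\cdots w(n)$; and the standardization step preserves the relative order of the remaining entries, so this equals $d(w')$. Summing $t^{d(w)}=t^{d(w')}$ over the bijection yields $A_{n,1}(t)=A_{n-1}(t)$.

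For the second identity, I would similarly take $w\in S_{n,n}$, so $w(1)=n$, and let $w'\in S_{n-1}$ be the permutation $w(2)w(3)\cdots w(n)$ (no re-indexing is needed since the remaining entries already form $\{1,\ldots,n-1\}$). Again this is a bijection $S_{n,n}\to S_{n-1}$. Since $w(1)=n>w(2)$, position $1$ is always a descent, so $d(w)=1+d(w')$. Summing $t^{d(w)}=t\cdot t^{d(w')}$ over the bijection yields $A_{n,n}(t)=tA_{n-1}(t)$. There is no real obstacle here; the only care needed is making sure the standardization in the first case preserves descents, which is immediate from the definition of descent as a comparison of adjacent entries.
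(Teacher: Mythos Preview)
Your proof is correct and matches the paper's own justification exactly: the paper simply remarks that the observation is immediate since if $w(1)=1$ there is never a descent in position $1$, and if $w(1)=n$ there always is. Your bijection (delete $w(1)$ and standardize) is precisely the implicit map behind that one-line explanation.
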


Tracking the effect of removing the letter $j$ from a permutation in $S_{n,j}$ yields the following recurrence for restricted Eulerian polynomials.

\begin{obs}[{Brenti and Welker \cite[Lemma 2.5(i)]{BW}}]\label{obs:rec}
We have
\[ A(n,i,j) = \sum_{k=1}^{j-1}A(n-1,i-1,k) + \sum_{k=j}^{n-1}A(n-1,i,k),\]
and thus
\[ A_{n,j}(t) = t\sum_{k=1}^{j-1} A_{n-1,k}(t) + \sum_{k=j}^{n-1} A_{n-1,k}(t).\]
\end{obs}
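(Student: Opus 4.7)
The plan is to prove the recurrence by a bijection that deletes the initial letter $j$ from $w \in S_{n,j}$ and standardizes the resulting word to an element of $S_{n-1}$. The advertised case split ($k < j$ versus $k \geq j$) should then correspond to whether the original first position carried a descent.

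More concretely, for $w = (j, w(2), w(3), \ldots, w(n)) \in S_{n,j}$, first delete $w(1) = j$ to get the word $(w(2), \ldots, w(n))$ on the letters $\{1, \ldots, n\} \setminus \{j\}$, then apply the order-preserving relabeling $\ell \mapsto \ell$ for $\ell < j$ and $\ell \mapsto \ell - 1$ for $\ell > j$ to obtain $\sigma \in S_{n-1}$. This map is a bijection $S_{n,j} \to S_{n-1}$. The key step is to track descents: the relabeling is order-preserving, so the descent set of $\sigma$ (as a word on $\{1,\ldots,n-1\}$) equals the descent set of $(w(2), \ldots, w(n))$, i.e.\ the descents of $w$ in positions $2, \ldots, n-1$. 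Thus
\[ d(w) = d(\sigma) + \bigl[\,w(1) > w(2)\,\bigr] = d(\sigma) + [w(2) < j]. \]

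Now split $S_{n,j}$ by whether the first position of $w$ is a descent. If $w(2) < j$, then position $1$ is a descent, so $d(w) = d(\sigma) + 1$, and after relabeling $\sigma(1) = w(2) \in \{1, \ldots, j-1\}$. Conversely, if $w(2) > j$, then position $1$ is an ascent, $d(w) = d(\sigma)$, and $\sigma(1) = w(2) - 1 \in \{j, \ldots, n-1\}$. Summing over the possible values $k = \sigma(1)$ in each case gives
\[ A(n,i,j) = \sum_{k=1}^{j-1} A(n-1, i-1, k) + \sum_{k=j}^{n-1} A(n-1, i, k), \]
as claimed. The polynomial identity follows immediately by multiplying the $d(w) = i$ version by $t^i$ and summing over $i$, which turns the extra $+1$ from the descent at position $1$ into the factor of $t$ in the first sum.

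The main obstacle is essentially bookkeeping: one has to confirm that the index shift in the standardization $\phi\colon \ell \mapsto \ell - [\ell > j]$ produces exactly the stated split of $\sigma(1)$ into $\{1,\ldots,j-1\}$ versus $\{j,\ldots,n-1\}$, and that no descent is created or destroyed except at the first position. Both facts are straightforward from $\phi$ being strictly increasing on $\{1,\ldots,n\} \setminus \{j\}$.
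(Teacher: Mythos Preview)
Your argument is correct and is exactly the approach the paper indicates: it says only that the recurrence follows by ``tracking the effect of removing the letter $j$ from a permutation in $S_{n,j}$,'' which is precisely your delete-and-standardize bijection together with the observation that only the descent at position~$1$ is affected.
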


A standard involution on permutations in $S_n$ is given by mapping each $i$ to $n+1-i$. This involution has the effect of exchanging ascents and descents. Hence the following symmetries hold.

\begin{obs}[{Brenti and Welker \cite[Lemma 2.5(ii)]{BW}}]\label{obs:sym}
We have
\[ A(n,i,j) = A(n, n-1-i, n+1-j),\]
and thus
\[ A_{n,j}(t) = t^{n-1} A_{n,n+1-j}(1/t).\]
\end{obs}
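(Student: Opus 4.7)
The plan is to realize the two asserted symmetries as two sides of a single bijective/involutive argument, using exactly the involution that the paragraph preceding the statement hints at. Define $\iota : S_n \to S_n$ by $\iota(w)(i) = n+1-w(i)$. This is clearly an involution on $S_n$, and since $\iota(w)(1) = n+1-w(1)$, it restricts to a bijection $S_{n,j} \to S_{n,n+1-j}$.

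Next I would verify the descent/ascent swap: for any $1 \le i \le n-1$, we have $w(i) > w(i+1)$ if and only if $n+1-w(i) < n+1-w(i+1)$, i.e., position $i$ is a descent of $w$ exactly when it is an ascent of $\iota(w)$. Since a permutation in $S_n$ has $n-1$ positions total, this gives $d(\iota(w)) = (n-1) - d(w)$. Combining this with the previous step, $\iota$ restricts to a bijection
\[ \{w \in S_{n,j} : d(w) = i\} \;\longleftrightarrow\; \{w \in S_{n,n+1-j} : d(w) = n-1-i\}, \]
which is exactly the identity $A(n,i,j) = A(n, n-1-i, n+1-j)$.

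Finally, I would derive the polynomial form by a direct substitution. Summing the integer identity against $t^i$,
\[ A_{n,j}(t) = \sum_{i=0}^{n-1} A(n,i,j)\, t^i = \sum_{i=0}^{n-1} A(n, n-1-i, n+1-j)\, t^i, \]
and reindexing with $k = n-1-i$ gives
\[ A_{n,j}(t) = t^{n-1} \sum_{k=0}^{n-1} A(n, k, n+1-j)\, t^{-k} = t^{n-1} A_{n, n+1-j}(1/t), \]
as claimed. There is no real obstacle here; the only step that could in principle require care is ensuring that $\iota$ respects the condition $w(1) = j$ and that ``ascent'' at position $i$ truly complements ``descent'' at position $i$ (so that the number of descents of $\iota(w)$ is $n-1-d(w)$, not something off by one), both of which are immediate from the definitions.
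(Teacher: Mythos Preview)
Your proposal is correct and matches the paper's approach exactly: the paper does not give a formal proof but simply invokes the involution $w\mapsto (i\mapsto n+1-w(i))$ that ``exchanges ascents and descents,'' which is precisely your $\iota$, and your derivation of the polynomial identity from the coefficient identity is the routine reindexing one would expect.
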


We now define the \emph{symmetric restricted Eulerian polynomials} by lumping together classes fixed by the involution just described, namely all permutations beginning with either $j$ or $n+1-j$:
\[ \A_{n,j}(t)= \sum_{w \in S_{n,j}\cup S_{n,n+1-j}} t^{d(w)}.\]
Observe that
$$\A_{n,j}(t) = \begin{cases}
A_{n,j}(t) + A_{n,n+1-j}(t) & \mbox{if } j \neq (n+1)/2, \text{ and} \\
A_{n,j}(t) & \mbox{if } j = (n+1)/2.
\end{cases}$$

By Observation \ref{obs:sym}, the polynomials $\A_{n,j}(t)$ have symmetric coefficients, and hence a $\gamma$-vector.
Clearly $\A_{n,j}(t)$ has symmetry axis at degree $\lfloor \frac{n-1}{2} \rfloor$.
If
$$\A_{n,j}(t) = \sum_{i=0}^{\lfloor (n-1)/2 \rfloor} \gamma_i^{(n,j)} t^i(1+t)^{n-1-2i},$$ let $\gamma^{(n,j)}=(\gamma_0^{(n,j)},\gamma_1^{(n,j)},\ldots, \gamma^{(n,j)}_{\lfloor (n-1)/2 \rfloor})$ denote the corresponding $\gamma$-vector.

We will have reason to consider the following polynomials as well, where $1\leq j < (n+1)/2$:
\[\A'_{n,j}(t) =
tA_{n,j}(t) + A_{n,n+1-j}(t).\]
Note that,
by Observation~\ref{obs:sym},
the integer coefficients of $\A'_{n,j}(t)$ are symmetric, and so $\A'_{n,j}(t)$ has a $\gamma$-vector.
Clearly $\A'_{n,j}(t)$ has symmetry axis at degree $\lfloor n/2 \rfloor$.

Define \[ \gamma'^{(n,j)} = (\gamma'^{(n,j)}_0, \gamma'^{(n,j)}_1, \ldots, \gamma'^{(n,j)}_{\lfloor n/2 \rfloor})\] to be the $\gamma$-vector for the polynomial $\A'_{n,j}(t)$.

Using Observation \ref{obs:rec}, we obtain the following recurrences.

\begin{lem}\label{lem:rec}
We have the following recurrences for the $\gamma^{(n,j)}$ and $\gamma'^{(n,j)}$:

\begin{enumerate}

\item If $j = (n+1)/2$, then \[ \gamma^{(n,(n+1)/2)} = \gamma'^{(n-1,1)} + \gamma'^{(n-1,2)} + \cdots + \gamma'^{(n-1,(n-1)/2)}.\]

\item For $j < (n+1)/2$,
\[
\gamma^{(n,j)} =
2\sum_{k=1}^{j-1} \gamma'^{(n-1,k)} + \sum_{k=j}^{\lfloor n/2\rfloor } \gamma^{(n-1,k)}.\]

\item For $j < (n+1)/2$,
\[
\gamma'^{(n,j)} =
\sum_{k=1}^{j-1} \gamma'^{(n-1,k)} + 2\sum_{k=j}^{\lfloor n/2 \rfloor} (0,\gamma^{(n-1,k)}).
\]

\end{enumerate}

\end{lem}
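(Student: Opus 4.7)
The plan is to reduce each of (1)--(3) to a polynomial identity relating $\A_{n,j}(t)$ or $\A'_{n,j}(t)$ to the $\A_{n-1,k}(t)$'s and $\A'_{n-1,k}(t)$'s, and then to extract the $\gamma$-vector identities by uniqueness of the expansion in the symmetric basis $\{t^i(1+t)^{m-2i}\}$. Specifically, I would first establish
\begin{align*}
\A_{n,(n+1)/2}(t) &= \sum_{k=1}^{(n-1)/2} \A'_{n-1,k}(t), \\
\A_{n,j}(t) &= 2\sum_{k=1}^{j-1} \A'_{n-1,k}(t) + (1+t)\sum_{k=j}^{\lfloor n/2 \rfloor} \A_{n-1,k}(t), \\
\A'_{n,j}(t) &= (1+t)\sum_{k=1}^{j-1} \A'_{n-1,k}(t) + 2t\sum_{k=j}^{\lfloor n/2 \rfloor} \A_{n-1,k}(t),
\end{align*}
corresponding to (1), (2) and (3) respectively.

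To prove these polynomial identities, I would apply the Brenti--Welker recurrence of Observation \ref{obs:rec} to each of $A_{n,j}(t)$ and $A_{n,n+1-j}(t)$ (scaling one of them by $t$ in the $\A'_{n,j}$ case) and collect the coefficient of each $A_{n-1,k}(t)$ for $1\leq k\leq n-1$. These coefficients split naturally into three ranges. On the outer ranges $1\leq k\leq j-1$ and $n+1-j\leq k\leq n-1$ the coefficients are exchanged by the involution $k\leftrightarrow n-k$, and the corresponding pairs combine, via $\A'_{n-1,k}(t) = tA_{n-1,k}(t) + A_{n-1,n-k}(t)$, into the $\A'_{n-1,k}$-terms on the right-hand side. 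On the middle range $j\leq k\leq n-j$, the pairs $(k,n-k)$ combine through $\A_{n-1,k}(t) = A_{n-1,k}(t) + A_{n-1,n-k}(t)$ into the $\A_{n-1,k}$-terms for $j\leq k\leq \lfloor n/2\rfloor$.

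Once these polynomial identities are in hand, I would read off the $\gamma$-vector recurrences. Every term in each identity is a symmetric polynomial with a common symmetry axis---degree $\lfloor (n-1)/2\rfloor$ for the first two, degree $\lfloor n/2\rfloor$ for the third. The key bookkeeping observations are that $(1+t)\cdot t^i(1+t)^{n-2-2i} = t^i(1+t)^{n-1-2i}$ absorbs the change of axis between $\A_{n-1,k}(t)$ and $\A_{n,j}(t)$, while $t\cdot t^i(1+t)^{n-2-2i} = t^{i+1}(1+t)^{n-2(i+1)}$ accounts for the index shift producing $(0,\gamma^{(n-1,k)})$ in (3). Since $\{t^i(1+t)^{m-2i}\}_{i=0}^{\lfloor m/2\rfloor}$ is a basis for the space of polynomials of degree at most $m$ satisfying $p(t) = t^m p(1/t)$, matching coefficients on both sides of the displayed identities yields the recurrences (1)--(3).

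The main obstacle is the bookkeeping in the coefficient-collecting step, particularly the parity case $n$ even, where the midpoint $k = n/2$ of the middle range pairs with itself. Fortunately, since $n/2 = ((n-1)+1)/2$, the definition yields $\A_{n-1,n/2}(t) = A_{n-1,n/2}(t)$ with no doubling factor, which is exactly what the formula requires, so the identities remain clean in that case as well.
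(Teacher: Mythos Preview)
Your proposal is correct and follows essentially the same approach as the paper: both derive the three polynomial identities for $\A_{n,j}(t)$ and $\A'_{n,j}(t)$ by applying the recurrence of Observation~\ref{obs:rec} to each $A_{n,\cdot}$ summand, pairing terms under $k\leftrightarrow n-k$ across the outer and middle ranges, and handling the $n$ even midpoint separately. The only difference is that you spell out explicitly the passage from polynomial identities to $\gamma$-vector identities via the basis $\{t^i(1+t)^{m-2i}\}$, which the paper leaves implicit.
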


\begin{proof}

Statement (1) follows because, for $j=(n+1)/2$ we have, by Observation \ref{obs:rec},
\begin{align*}
\A_{n,(n+1)/2} = A_{n,(n+1)/2}(t) &= t\sum_{k=1}^{(n-1)/2}A_{n-1,k}(t) + \sum_{k=(n+1)/2}^{n-1}A_{n-1,k}(t)\\
&= t\sum_{k=1}^{(n-1)/2}A_{n-1,k}(t) + \sum_{k=1}^{(n-1)/2}A_{n-1,n-k}(t)\\
&= \sum_{k=1}^{(n-1)/2}\A'_{n-1,k}(t).
\end{align*}

If $j < (n+1)/2$, then Observation \ref{obs:rec} implies
\begin{align*}
\A_{n,j}(t) &= A_{n,j}(t) + A_{n,n+1-j}(t)\\
&= t\sum_{k=1}^{j-1} A_{n-1,k}(t) + \sum_{k=j}^{n-1} A_{n-1,k}(t) + t\sum_{k=1}^{n-j} A_{n-1,k}(t) + \sum_{k=n+1-j}^{n-1} A_{n-1,k}(t)\\
&= 2t\sum_{k=1}^{j-1} A_{n-1,k}(t) + (1+t)\sum_{k=j}^{n-j} A_{n-1,k}(t)
+ 2\sum_{k=n+1-j}^{n-1} A_{n-1,k}(t)\\
&= 2\sum_{k=1}^{j-1} \left(tA_{n-1,k}(t) + A_{n-1,n-k}(t)\right) + (1+t)\sum_{k=j}^{\lfloor \frac{n-1}{2}\rfloor}\left(A_{n-1,k}(t) + A_{n-1,n-k}(t)\right) \\
& \qquad \qquad + \begin{cases} (1+t)A_{n-1,n/2} & \mbox{if } 2|n \\
0 & \mbox{if } 2\nmid n.
\end{cases}\\
&= 2\sum_{k=1}^{j-1} \A'_{n-1,k}(t) + (1+t)\sum_{k=j}^{\lfloor n/2 \rfloor }\A_{n-1,k}(t),
\end{align*}
which yields statement (2).

Lastly we consider $\A'_{n,j}(t)$ in terms of Observation \ref{obs:rec}. We have
\begin{align*}
\A'_{n,j}(t) &= tA_{n,j}(t) + A_{n,n+1-j}(t)\\
&= t^2\sum_{k=1}^{j-1} A_{n-1,k}(t) + t\sum_{k=j}^{n-1}A_{n-1,k}(t) + t\sum_{k=1}^{n-j}A_{n-1,k}(t) + \sum_{k=n+1-j}^{n-1}A_{n-1,k}(t)\\
&= t^2\sum_{k=1}^{j-1} A_{n-1,k}(t) + t\sum_{k=1}^{n-j}A_{n-1,n-k}(t) + t\sum_{k=1}^{n-j}A_{n-1,k}(t) + \sum_{k=1}^{j-1}A_{n-1,n-k}(t)\\
&= t\sum_{k=1}^{j-1}\A'_{n-1,k}(t) + t\sum_{k=j}^{n-j}A_{n-1,n-k}(t) + \sum_{k=1}^{j-1}\A'_{n-1,k}(t) +  t\sum_{k=j}^{n-j}A_{n-1,k}(t)\\
&= (t+1)\sum_{k=1}^{j-1}\A'_{n-1,k}(t) + t\sum_{k=j}^{n-j}\left(A_{n-1,k}(t) + A_{n-1,n-k}(t)\right)\\
&=(t+1)\sum_{k=1}^{j-1} \A'_{n-1,k}(t) + 2t\sum_{k=j}^{\lfloor n/2 \rfloor }\A_{n-1,k}(t).
\end{align*}
This confirms (3), completing the proof.
\end{proof}

\begin{remark}
In the case of the ordinary Eulerian polynomials, the coefficients of the $\gamma$-vector have the following interpretation, first described by Foata and Sch\"utzenberger \cite{FS}. Let $\widehat{S}_n = \{ w \in S_n : w_{n-1} < w_n, \mbox{ and if } w_{i-1} > w_i \mbox{ then } w_i < w_{i+1}\}$. Then $\gamma_i = | \{ w \in \widehat{S}_n : d(w) = i\}|$, so that $A_n(t) = \sum \gamma_i t^i(1+t)^{n-1-2i}$. It would be interesting to have a similar combinatorial interpretation for the coefficients of the vectors $\gamma^{(n,j)}$.
\end{remark}

\section{Symmetric $h$-vectors}\label{sec:main}

If $\Delta$ has a symmetric $h$-vector (that is, if $h_i = h_{n-i}$ for all $i$, where $\Delta$ has dimension $n-1$), then from Theorem \ref{thm:BW} and Observation \ref{obs:sym} it follows that $h(\sd(\Delta))$ is symmetric as well.

\begin{cor}[{Brenti and Welker \cite[Corollary 2.6]{BW}}]
If $h(\Delta)$ is symmetric, then $h(\sd(\Delta))$ is symmetric.
\end{cor}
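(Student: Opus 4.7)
The plan is to combine Theorem~\ref{thm:BW} with the symmetry in Observation~\ref{obs:sym} in a direct computation: I want to show $h_i(\sd(\Delta)) = h_{n-i}(\sd(\Delta))$ for every $0 \le i \le n$ by expanding both sides using \eqref{eqn:bw} and then using the symmetry of the coefficients $A(n+1,i,j+1)$ together with the assumed symmetry of $h(\Delta)$.

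First I would apply Theorem~\ref{thm:BW} at index $n-i$ to get
\[
h_{n-i}(\sd(\Delta)) = \sum_{j=0}^n A(n+1,n-i,j+1)\, h_j(\Delta).
\]
Next I would apply Observation~\ref{obs:sym} with parameters $(n+1,n-i,j+1)$: since $(n+1)-1-(n-i) = i$ and $(n+1)+1-(j+1) = n+1-j$, this gives $A(n+1,n-i,j+1) = A(n+1,i,n+1-j)$. Reindexing by $k = n-j$ then yields
\[
h_{n-i}(\sd(\Delta)) = \sum_{k=0}^n A(n+1,i,k+1)\, h_{n-k}(\Delta).
\]
Finally I would invoke the hypothesis $h_{n-k}(\Delta) = h_k(\Delta)$ to recognize the right-hand side as $h_i(\sd(\Delta))$ via \eqref{eqn:bw}, completing the argument.

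There is essentially no obstacle here: the whole proof is a bookkeeping exercise that checks that the symmetry index $\lfloor n/2 \rfloor$ for $A(n+1,\cdot,\cdot)$ is compatible with the symmetry axis of $h(\Delta)$. The one thing to be careful about is getting the three indices in Observation~\ref{obs:sym} substituted correctly (the dimension parameter becomes $n+1$, not $n$, because the restricted Eulerian numbers appearing in Theorem~\ref{thm:BW} are on $S_{n+1}$), after which the two reindexings align perfectly.
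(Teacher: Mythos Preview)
Your proof is correct and follows exactly the approach indicated in the paper, which simply notes that the result follows from combining Theorem~\ref{thm:BW} with Observation~\ref{obs:sym}; you have carried out that computation explicitly and accurately.
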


Moreover, the following proposition holds.

\begin{prp}\label{prp:symm}
If $\Delta$ is a boolean complex of dimension $n-1$, with $h(\Delta) = (h_0, h_1,\ldots,h_{n})$ symmetric, then
\[ h_i(\sd(\Delta)) = \sum_{j=0}^{\lfloor n/2\rfloor} (A(n+1,i,j+1) + A(n+1, i , n+1-j)) h_j,\]
and thus
\[ h(\sd(\Delta); t) = \sum_{i=0}^{\lfloor n/2 \rfloor} h_i \A_{n+1,i+1}(t).\] In terms of $\gamma$-vectors,
\[ \gamma(\sd(\Delta)) = \sum_{i=0}^{\lfloor n/2 \rfloor}h_i\gamma^{(n+1,i+1)} .\]
\end{prp}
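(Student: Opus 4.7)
The plan is to derive this directly from the Brenti--Welker formula (Theorem~\ref{thm:BW}), by using the symmetry $h_j = h_{n-j}$ to combine the contributions of $h_j$ and $h_{n-j}$ into a single term indexed by $j\in\{0,1,\ldots,\lfloor n/2\rfloor\}$.

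First I would start from the identity $h_i(\sd(\Delta)) = \sum_{j=0}^n A(n+1,i,j+1)\,h_j$ of Theorem~\ref{thm:BW} and split the sum at the symmetry axis, writing it as $\sum_{j=0}^{\lfloor n/2\rfloor}$ plus $\sum_{j=\lfloor n/2\rfloor+1}^{n}$. Substituting $j\mapsto n-j$ in the second sum and invoking $h_{n-j}=h_j$ brings both halves over the same index range, and the coefficient of $h_j$ becomes $A(n+1,i,j+1)+A(n+1,i,n+1-j)$. A short parity check is required at this step: when $n$ is even the pairing has the fixed point $j=n/2$, which must be counted only once (the two summands coincide there), while for $n$ odd no index is fixed. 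This yields the first displayed identity (understood with this convention at the middle term when $n$ is even).

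Next, to obtain the polynomial version, I would multiply by $t^i$ and sum over $i$, converting the coefficients $A(n+1,i,j+1)+A(n+1,i,n+1-j)$ into $A_{n+1,j+1}(t)+A_{n+1,n+1-j}(t)$. By the definition of the symmetric restricted Eulerian polynomial, this is precisely $\A_{n+1,j+1}(t)$ for all pairs $j\neq n-j$; and in the middle case $j=n/2$ (only for $n$ even), the single term $A_{n+1,n/2+1}(t)$ agrees with $\A_{n+1,(n+2)/2}(t)$ by the second-case clause in the definition of $\A_{n,j}(t)$. Either parity therefore produces
\[ h(\sd(\Delta);t) \;=\; \sum_{j=0}^{\lfloor n/2\rfloor} h_j\, \A_{n+1,j+1}(t). \]

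Finally, for the $\gamma$-vector identity, I would observe that both $h(\sd(\Delta);t)$ and each $\A_{n+1,j+1}(t)$ have symmetric integer coefficients with symmetry axis at degree $\lfloor n/2\rfloor$ (for $\A_{n+1,j+1}(t)$ this is Observation~\ref{obs:sym}; for $h(\sd(\Delta);t)$ it is the cited corollary of Brenti--Welker). Hence they all admit a unique expansion in the basis $\{t^i(1+t)^{n-2i}\}_{i=0}^{\lfloor n/2\rfloor}$, and passage from a polynomial to its $\gamma$-vector is linear. Equating $\gamma$-coefficients in the polynomial identity above therefore yields $\gamma(\sd(\Delta))=\sum_{j=0}^{\lfloor n/2\rfloor} h_j\,\gamma^{(n+1,j+1)}$. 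The main obstacle is only the parity bookkeeping in the very first step; once the symmetric grouping is written down, both the polynomial and the $\gamma$-vector claims follow immediately from the definitions and linearity.
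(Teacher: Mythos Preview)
Your proposal is correct and is exactly the argument the paper has in mind; indeed, the paper states the proposition without proof, treating it as an immediate consequence of Theorem~\ref{thm:BW} together with the symmetry $h_j=h_{n-j}$ and the definition of $\A_{n+1,j+1}(t)$. You have supplied precisely those details, and your observation about the parity bookkeeping at $j=n/2$ is apt: the first displayed formula in the proposition is literally correct only for odd $n$, while for even $n$ the middle term must be read with the convention you describe (the polynomial and $\gamma$-vector identities are unaffected, since the two-case definition of $\A_{n,j}(t)$ absorbs this).
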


For example, if $n = 5$ and $h(\Delta) = (h_0,h_1,h_2,h_3=h_2,h_4=h_1,h_5=h_0)$, then
\begin{align*} h(\sd(\Delta))^t &= \left( \begin{array}{ c }
h_0 \\
27h_0 + 18h_1 + 12 h_2 \\
92h_0 + 102 h_1 + 108 h_2 \\
92h_0 +102h_1 + 108 h_2 \\
27h_0 + 18h_1 + 12 h_2 \\
h_0
\end{array} \right)\\
 &=
h_0\left( \begin{array}{ c }
1 \\
5 \\
10 \\
10 \\
5 \\
1
\end{array} \right) +
(22h_0+18h_1+12h_2)\left( \begin{array}{ c }
0 \\
1 \\
3 \\
3 \\
1 \\
0
\end{array} \right)+
(16h_0+48h_1+72h_2)\left( \begin{array}{ c }
0 \\
0 \\
1 \\
1 \\
0 \\
0
\end{array} \right).
\end{align*}
Equivalently, \[ h(\sd(\Delta);t) = h_0\A_{6,1}(t) + h_1\A_{6,2}(t) + h_2\A_{6,3}(t),\] or
\begin{align*}
 \gamma(\sd(\Delta)) &= h_0\gamma^{(6,1)} + h_1\gamma^{(6,2)} + h_2\gamma^{(6,3)}\\
 &=h_0(1,22,16) + h_1(0,18,48) + h_2(0,12,72).
\end{align*}

If $f=(f_0,f_1,\ldots,f_d)$ is a $d$-FFK vector and $f_d\neq 0$, we simply say that $f$ is an FFK-vector, that is, $f$ is the $f$-vector of a balanced complex.
Our goal now is to show that $\gamma(\sd(\Delta))$ is an FFK-vector.

\subsection{Proof of Theorem~\ref{thm:bary}}

Since $\A_{n,1}(t) = (1+t)A_{n-1}(t)$ (Observation \ref{obs:eul}), where $A_{n-1}(t)$ is the usual Eulerian polynomial, we see that $\A_{n,1}(t)$ and $A_{n-1}(t)$ have the same $\gamma$-vector. By \cite[Theorem 6.1 (1)]{NP}, the $\gamma$-vector of $A_{n-1}(t)$ is an FFK-vector, and thus $\gamma^{(n,1)}$ is an FFK-vector for any $n$.
As $A_{n-1}(t)$ has symmetry axis at degree $\lfloor \frac{n}{2} \rfloor-1$, $\gamma^{(n,1)}=(1,f_1,\ldots,f_d)$ where $d=\lfloor \frac{n}{2} \rfloor -1$.
Since $h_0=1$ for any boolean complex $\Delta$, we have (if $\dim \Delta = n-1$): \[ \gamma(\sd(\Delta)) = \gamma^{(n+1,1)} + h_1\gamma^{(n+1,2)} + \cdots \]

Observe that $\gamma^{(n+1,j)}_0 =0$ for all $j>1$. What we will show is that the vectors $h_i\gamma^{(n+1,i+1)}$ are $d$- or $(d+1)$-good for $\gamma^{(n+1,1)}$, in the sense of Definition \ref{def:good}, for all $i\geq 1$. More precisely, we will prove the following.

\begin{prp}\label{prp:good}
Let $\gamma^{(n,1)} = (1,f_1,\ldots,f_d)$, where $d = \lfloor n/2 \rfloor -1$.
\begin{enumerate}
\item If $n$ is even, i.e., $n=2d+2$, then
\begin{enumerate}
 \item $\gamma^{(n,j)}$, $1<j\leq n/2$, is $d$-good for $\gamma^{(n,1)}$, and
 \item $\gamma'^{(n,j)}$, $1\leq j \leq n/2$, is $(d+1)$-good for $\gamma^{(n,1)}$.
\end{enumerate}
\item If $n$ is odd, i.e., $n=2d+3$, then
\begin{enumerate}
\item $\gamma^{(n,j)}$, $1<j\leq (n+1)/2$, is $(d+1)$-good for $\gamma^{(n,1)}$, and
\item $\gamma'^{(n,j)}$, $1\leq j< (n+1)/2$, is $(d+1)$-good for $\gamma^{(n,1)}$.
\end{enumerate}
\end{enumerate}
\end{prp}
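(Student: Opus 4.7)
The plan is to prove all four sub-cases of the proposition simultaneously by induction on $n$, with small base cases $n=3,4$ verified by direct computation from the polynomials $\A_{n,j}(t)$ and $\A'_{n,j}(t)$. For the inductive step, Lemma~\ref{lem:rec} writes each $\gamma^{(n,j)}$ and $\gamma'^{(n,j)}$ as a nonnegative integer combination of level-$(n-1)$ vectors $\gamma^{(n-1,k)}$, $\gamma'^{(n-1,k)}$ (possibly with a $(0,\cdot)$-shift, in the $\gamma'^{(n,j)}$ case). By the inductive hypothesis each of these level-$(n-1)$ vectors decomposes in the form required by Definition~\ref{def:good}, and my plan is to substitute these decompositions into the recurrence and reorganise them into a decomposition of the target vector with respect to $\gamma^{(n,1)}$.

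The key auxiliary tool is a $\gamma$-vector recurrence for the Eulerian polynomial, derived from the classical identity $A_n(t)=(1+(n-1)t)A_{n-1}(t)+t(1-t)A_{n-1}'(t)$ by expanding in the basis $\{t^i(1+t)^{n-1-2i}\}$:
\[
\gamma_i(A_{n-1}) \;=\; (i+1)\,\gamma_i(A_{n-2}) \;+\; 2(n-1-2i)\,\gamma_{i-1}(A_{n-2}).
\]
Since $\gamma^{(n,1)}$ and $\gamma^{(n-1,1)}$ coincide with $\gamma(A_{n-1})$ and $\gamma(A_{n-2})$ (up to a trailing zero in the odd case), this identity delivers both the componentwise dominance $\gamma^{(n,1)}_i\geq\gamma^{(n-1,1)}_i$ and the $(i+1)$-divisibility $\gamma^{(n,1)}_i\geq(i+1)\gamma^{(n-1,1)}_i$, valid throughout the relevant range. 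The former lets any $(D-1)$-FFK-vector dominated by $\gamma^{(n-1,1)}$ be promoted, by padding with a trailing zero, to a $D$-FFK-vector dominated by $\gamma^{(n,1)}$; the latter upgrades dominance by $\gamma^{(n-1,1)}$ to the divisibility required by Definition~\ref{def:good}(2).

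For case~1(a), the recurrence in Lemma~\ref{lem:rec}(2) has only unshifted summands, so the inductive hypothesis yields $\gamma^{(n,j)}=(0,g)$ with $g$ a sum of $(D-1)$-FFK-vectors $\leq\gamma^{(n-1,1)}$; the divisibility inequality above then produces a direct witness that $(0,g)$ is $D$-good for $\gamma^{(n,1)}$. For the three $(D+1)$-good cases (1(b), 2(a), 2(b)), I plan to split the recurrence into an unshifted part (contributing, via the padding construction, a sum of $D$-FFK-vectors dominated by $\gamma^{(n,1)}$) and a shifted part (contributing, via the divisibility inequality, a shifted sum of $(D-1)$-FFK-vectors with the required $(i+1)$-divisibility). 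The unshifted part is then a $(D+1)$-good summand and the shifted part is a $D$-good summand in the sense of Definition~\ref{def:good}, and Observation~\ref{obs:dd+1} combines them into a single $(D+1)$-good witness; the nonvanishing of $g_{D+1}$ is supplied by the unshifted $\gamma'^{(n-1,k)}$ summands (when these appear in the recurrence) or by the shifted $\gamma^{(n-1,k)}$ with $k\geq2$, whose top entries are nonzero by the inductive hypothesis. I expect the main obstacle to be the precise bookkeeping of the shifted sums in Lemma~\ref{lem:rec}(3), particularly the handling of the $k=1$ term, which must be absorbed into the unshifted $(D+1)$-good part using the fact that $\gamma^{(n-1,1)}$ is itself a $(D-1)$- or $D$-FFK-vector (depending on the parity of $n-1$) dominated by $\gamma^{(n,1)}$.
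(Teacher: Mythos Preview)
Your proposal is correct and follows essentially the same inductive architecture as the paper: induction via the recurrences of Lemma~\ref{lem:rec}, combined with a ``transfer'' step (the paper's Lemma~\ref{lem:transgood}) that pushes $d$-good and $(d+1)$-good witnesses from $\gamma^{(n-1,1)}$ up to $\gamma^{(n,1)}$, together with the special handling of the $k=1$ term via the identity $\gamma'^{(m,1)}=(0,2\gamma^{(m,1)})$ and the domination $\gamma^{(n,1)}=\sum_{k}\gamma^{(n-1,k)}\geq f$.

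The one genuine difference is how you obtain the key inequality $(i+1)\gamma^{(n-1,1)}_i\leq\gamma^{(n,1)}_i$. The paper proves this combinatorially (Lemma~\ref{lem:Gineq}) by inserting the letter $n$ into any of the $i+1$ blocks of a face of the balanced complex $\Gamma(n-1)$ from~\cite{NP}. You instead extract it algebraically from the classical Eulerian recurrence $A_n(t)=(1+(n-1)t)A_{n-1}(t)+t(1-t)A_{n-1}'(t)$, which in the $\gamma$-basis reads $\gamma_i(A_{n-1})=(i+1)\gamma_i(A_{n-2})+2(n-1-2i)\gamma_{i-1}(A_{n-2})$ and immediately gives both domination and the $(i+1)$-multiplicative bound. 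Your route is self-contained and avoids invoking the complex $\Gamma(n)$; the paper's route has the advantage of exhibiting an explicit injection of faces, which fits its broader goal of realising these vectors as $f$-vectors of concrete balanced complexes.
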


In proving Proposition \ref{prp:good}, it will be helpful to recall a definition of \cite{NP} and collect some preliminary results.

Let \[\widehat{S}_n = \{ w \in S_n : w(n-1) < w(n), \mbox{ and if } w(i-1) > w(i) \mbox{ then } w(i) < w(i+1) \}.\] In other words, $\widehat{S}_n$ is the set of permutations in $S_n$ with no double descents and no final descent. We write elements from this set as permutations in one-line notation with bars at the descent positions, or as certain ordered lists of blocks written in increasing order. For example, \[ 235|1479|68, \quad 8|34|19|27|56, \quad 1235|46789\] are elements of $\widehat{S}_9$. Notice that the leftmost block can have one element, but all other blocks have size at least two. Further, if $w = B_1 | \cdots |B_k$, then $\max{B_i} > \min{B_{i+1}}$ for $i=1,\ldots,k-1$. That is, the number of bars in $w$ is $d(w)$.

Define $\Gamma(n)$ to be the simplicial complex whose faces are the elements of $\widehat{S}_n$, with $\dim w = d(w) -1$. We have $w \subseteq v$ if $v$ can be obtained from $w$ by refinement of blocks. Vertices are elements with only one bar. This is a balanced simplicial complex of dimension $\lfloor \frac{n-1}{2} \rfloor -1$.
The color set of a face $w$ is $\col(w) = \{ \lceil i/2 \rceil : w(i)> w(i+1)\}$. A result of \cite{NP} has \[ f(\Gamma(n)) = \gamma^{(n+1,1)}.\] With this interpretation for $\gamma^{(n+1,1)}$, we can prove the following.

\begin{lem}\label{lem:Gineq}
For all $n\geq 1$, $1\leq i \leq \lfloor n/2\rfloor -1$, we have \[(i+1)\gamma^{(n,1)}_i \leq \gamma^{(n+1,1)}_i.\]
\end{lem}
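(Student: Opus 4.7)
The plan is to leverage the combinatorial interpretation recorded just above the lemma, namely $f(\Gamma(n)) = \gamma^{(n+1,1)}$. Since $\dim w = d(w) - 1$ in $\Gamma(n)$, we have $\gamma^{(n+1,1)}_i = |\{w \in \widehat{S}_n : d(w) = i\}|$, and the desired inequality becomes the counting statement
\[
(i+1)\,\bigl|\{w \in \widehat{S}_{n-1} : d(w) = i\}\bigr| \;\leq\; \bigl|\{w \in \widehat{S}_n : d(w) = i\}\bigr|.
\]
I would establish this by constructing an explicit injection
\[
\Phi : \{1,2,\ldots,i+1\} \times \{w \in \widehat{S}_{n-1} : d(w) = i\} \longrightarrow \{w \in \widehat{S}_n : d(w) = i\},
\]
with the factor $i+1$ arising from the $i+1$ blocks of a permutation with $i$ descents.

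Given $w = B_1 \,|\, B_2 \,|\, \cdots \,|\, B_{i+1} \in \widehat{S}_{n-1}$ and $k \in \{1, \ldots, i+1\}$, I would define $\Phi(k,w)$ to be the permutation obtained by inserting the letter $n$ at the end of the $k$-th block:
\[
\Phi(k,w) \;=\; B_1 \,|\, \cdots \,|\, B_{k-1} \,|\, (B_k \cup \{n\}) \,|\, B_{k+1} \,|\, \cdots \,|\, B_{i+1},
\]
each block written in increasing order. Because $n$ is the global maximum, it sits at the right end of its block, the block remains increasing, and no new within-block descent is created. The $i$ existing block boundaries remain descents: $\max(B_k \cup \{n\}) = n > \min B_{k+1}$ at the boundary exiting the modified block, and $\min(B_k \cup \{n\}) = \min B_k$ at the boundary entering it. Hence $d(\Phi(k,w)) = i$.

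The main verification is that $\Phi(k,w) \in \widehat{S}_n$, i.e., no double descent and no final descent. The non-first blocks of $\Phi(k,w)$ coincide with those of $w$ except for the $k$-th, which has only gained an element, so every non-first block still has size at least $2$; this rules out double descents. For the no-final-descent condition, if $k \leq i$ the last block is unchanged, and if $k = i+1$ the last entry is $n$, preceded by $\max B_{i+1} < n$. Injectivity is then immediate: the letter $n$ in $\Phi(k,w)$ identifies $k$ as the index of the block containing $n$, and deletion of $n$ from that block recovers $w$.

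I expect the main place to watch is the bookkeeping on blocks of size one, since the $\widehat{S}_n$ conditions force every non-first block to have size at least $2$. The construction sidesteps this cleanly because $|B_k \cup \{n\}| = |B_k| + 1 \geq 2$ always, so no block is created or destroyed; the only genuinely new feature is the single additional letter $n$, which is placed exactly where it cannot disturb the block/descent structure. Once $\Phi$ is verified to be a well-defined injection, the inequality $(i+1)\gamma^{(n,1)}_i \leq \gamma^{(n+1,1)}_i$ follows at once.
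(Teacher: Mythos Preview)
Your proof is correct and follows essentially the same approach as the paper: both use the interpretation $f(\Gamma(n)) = \gamma^{(n+1,1)}$ and insert the letter $n$ at the end of any of the $i+1$ blocks of a face $w = B_1|\cdots|B_{i+1} \in \Gamma(n-1)$ to produce distinct faces of $\Gamma(n)$ with the same number of descents. Your write-up is more explicit in verifying membership in $\widehat{S}_n$ and injectivity, but the underlying construction is identical.
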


\begin{proof}
Consider any face of $\Gamma(n-1)$ with $i$ bars, say $w = B_1 | \cdots | B_{i+1}$. Then we can associate to $w$ a face of $\Gamma(n)$ by adding the number $n$ to any of the $i+1$ blocks of $w$. If $n$ is inserted at the end of a block, no new descents will be created and hence this insertion takes $(i-1)$-dimensional faces to $(i-1)$-dimensional faces. If $w \neq v$ are in $\Gamma(n-1)$ then it is clear that the set of faces formed from $w$ cannot intersect the set of faces formed from $v$. The inequality follows.
\end{proof}

The next lemma is crucial to later analyses.

\begin{lem}\label{lem:transgood}
Let $n=2d+2$. Then:
\begin{enumerate}
\item If $(0,f)$ is $d$-good for $\gamma^{(n-1,1)}$, then $(0,f)$ is $d$-good for $\gamma^{(n,1)}$.
\item If $(0,f)$ is $(d+1)$-good for $\gamma^{(n,1)}$, then $(0,f)$ is $(d+1)$-good for $\gamma^{(n+1,1)}$.
\end{enumerate}
\end{lem}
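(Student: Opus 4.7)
My plan is to reduce the lemma to a single componentwise inequality:
\[
\gamma^{(m,1)}_i \le \gamma^{(m+1,1)}_i \quad \text{for all } i \ge 0,
\]
with $\gamma^{(m,1)}$ padded by a trailing zero when it is shorter than $\gamma^{(m+1,1)}$. Once this is in hand, both statements follow formally: the decomposition of $f$ as a sum of FFK-vectors that witnesses $d$-goodness (respectively $(d+1)$-goodness) for the smaller parent continues to witness the same property for the larger parent, since the inequalities in Definition~\ref{def:good} only get easier to satisfy when the parent is componentwise enlarged, and the ``nonzero top entry'' condition on $f$ does not involve the parent at all.

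The heart of the matter is establishing the componentwise inequality. Lemma~\ref{lem:Gineq} already supplies the much stronger bound $(i+1)\gamma^{(m,1)}_i \le \gamma^{(m+1,1)}_i$ on the interior range $1 \le i \le \lfloor m/2 \rfloor - 1$. At $i = 0$ both vectors equal $1$. At the top-end indices falling outside the interior range (for instance, $i = d$ when $m = n-1 = 2d+1$, where $\lfloor m/2 \rfloor - 1 = d-1$), the entry $\gamma^{(m,1)}_i$ is zero by our padding convention while $\gamma^{(m+1,1)}_i$ is a nonnegative integer, so the inequality is automatic. This exhausts all indices.

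With the componentwise inequality established, I would simply unwind Definition~\ref{def:good}. For~(1): given a decomposition $f = f^{(1)} + \cdots + f^{(k)}$ into $(d-1)$-FFK-vectors satisfying $\gamma^{(n-1,1)}_i \ge (i+1) f^{(j)}_i$ for all $i,j$, chain through $\gamma^{(n,1)}_i \ge \gamma^{(n-1,1)}_i \ge (i+1) f^{(j)}_i$ to conclude that the same decomposition shows $(0,f)$ is $d$-good for $\gamma^{(n,1)}$. For~(2): any decomposition of $f$ into $d$-FFK-vectors each dominated by $\gamma^{(n,1)}$ is, by entrywise comparison, also dominated by $\gamma^{(n+1,1)}$, and the other parts of the definition (the decomposition itself and the requirement that the $(d+1)$st entry of $f$ be nonzero) are unchanged.

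The only obstacle worth mentioning is the bookkeeping at the boundary of the range of Lemma~\ref{lem:Gineq}, together with the slight indexing asymmetry coming from the fact that $\gamma^{(n-1,1)}$ has a natural trailing zero (and so one fewer ``true'' entry than $\gamma^{(n,1)}$) when $n = 2d+2$. Once those small checks are made, the proof is purely mechanical.
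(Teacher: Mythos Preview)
There is a genuine gap in your argument for part~(1), stemming from a misreading of Definition~\ref{def:good}. The label ``$d$-good'' does not name a fixed condition; which case of the definition applies depends on the top index of the \emph{parent} vector. Since $n=2d+2$, the vector $\gamma^{(n-1,1)}$ has top index $d-1$, so ``$(0,f)$ is $d$-good for $\gamma^{(n-1,1)}$'' falls under case~(1) of the definition (with the definition's ``$d$'' equal to $d-1$): it says $f=f^{(1)}+\cdots+f^{(k)}$ with each $f^{(j)}$ a $(d-1)$-FFK vector \emph{dominated} by $\gamma^{(n-1,1)}$, i.e.\ $\gamma^{(n-1,1)}_i\ge f^{(j)}_i$. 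You wrote the hypothesis as $\gamma^{(n-1,1)}_i\ge (i+1)f^{(j)}_i$, which is strictly stronger and not what you are given.

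On the other hand, $\gamma^{(n,1)}$ has top index $d$, so ``$d$-good for $\gamma^{(n,1)}$'' is case~(2) of the definition and \emph{does} require $\gamma^{(n,1)}_i\ge (i+1)f^{(j)}_i$. Thus the passage from the hypothesis to the conclusion in~(1) genuinely needs a multiplicative gain of $(i+1)$, and the bare componentwise inequality $\gamma^{(n-1,1)}_i\le\gamma^{(n,1)}_i$ is not enough. This is exactly why the paper invokes the full strength of Lemma~\ref{lem:Gineq}: from $f^{(j)}_i\le\gamma^{(n-1,1)}_i$ one gets $(i+1)f^{(j)}_i\le(i+1)\gamma^{(n-1,1)}_i\le\gamma^{(n,1)}_i$. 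You already cite that lemma, so the fix is easy; but your stated plan (``reduce the lemma to a single componentwise inequality'') does not suffice for~(1). Your treatment of part~(2) is fine, since there both parents $\gamma^{(n,1)}$ and $\gamma^{(n+1,1)}$ have top index $d$, the notion of $(d+1)$-goodness is the same domination condition in both cases, and componentwise monotonicity is all that is required.
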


\begin{proof}
To prove (1), we consider $f = (f_1,\ldots,f_{d}) = f^{(1)}+\cdots + f^{(k)}$, where each $f^{(j)}=(1,f^{(j)}_1,\cdots,f^{(j)}_{d-1})$ is a $(d-1)$-FFK vector and
$f^{(j)}_i \leq \gamma^{(n-1,1)}_i$ for all $i$ and all $j$. Then by Lemma \ref{lem:Gineq}, we have that for all $1\leq i\leq d-1$
\[ (i+1)f^{(j)}_i \leq (i+1)\gamma^{(n-1,1)}_i \leq \gamma^{(n,1)}_i,\] and thus from Lemma \ref{lem:dgood} we conclude that $(0,f)$ is $d$-good for $\gamma^{(n,1)}$.

We now consider (2). If $(0,f)$ is $(d+1)$-good for $\gamma^{(n,1)}$, then we can write $f=f^{(1)} + \cdots + f^{(k)}$, for some $d$-FFK-vectors $f^{(j)}$ such that $\gamma^{(n,1)}_i \geq f^{(j)}_i$ for all $i$ and $j$. By Lemma \ref{lem:Gineq}
we find $\gamma^{(n+1,1)}$ dominates the $f^{(j)}$ as well, and so by Lemma \ref{lem:cone1}
we conclude that $(0,f)$ is $(d+1)$-good for $\gamma^{(n+1,1)}$ (note that $\gamma^{(n+1,1)}$ has degree $d$).
\end{proof}

In particular, since $(0,\gamma^{(n-1,1)})$ is clearly $d$-good for $\gamma^{(n-1,1)}$, Lemma \ref{lem:transgood} implies $(0,\gamma^{(n-1,1)})$ is $d$-good for $\gamma^{(n,1)}$. Similarly, $(0,\gamma^{(n,1)})$ is $(d+1)$-good for $\gamma^{(n+1,1)}$. We are now ready to prove Proposition \ref{prp:good}.

\begin{proof}[Proof of Proposition \ref{prp:good}.]

We will proceed by induction on $d = \lfloor n/2 \rfloor -1$. If $d=0$, we have:
\[\begin{array}{ c c l c }
\gamma^{(2,1)} = (1) & \gamma'^{(2,1)} = (0,2) & \gamma^{(3,1)} = (1) & \gamma'^{(3,1)} = (0,2) \\
 & & \gamma^{(3,2)} = (0,2) &
\end{array}\]
and the claims are trivially verified. That is, $(0,2)$ is $1$-good for $(1)$. With $d=1$, we have:
\[\begin{array}{ l l l l }
\gamma^{(4,1)} = (1,2) & \gamma'^{(4,1)} = (0,2,4) & \gamma^{(5,1)} = (1,8) & \gamma'^{(5,1)} = (0,2,16) \\
\gamma^{(4,2)} = (0,6) & \gamma'^{(4,2)} = (0,2,4) & \gamma^{(5,2)} = (0,10,8) & \gamma'^{(5,2)} = (0,2,16) \\
 & & \gamma^{(5,3)} = (0,4,8) &
\end{array}\]
From here we see the first instance of part (1a) of the proposition.

Now suppose that the claims of the proposition hold for $d-1$ and we will prove it for $d$.

\noindent\textbf{Case 1 ($\bm{n}$ even).} Let $n=2d+2$ and consider $\gamma^{(n,j)}$ for some $1<j\leq n/2$. We wish to show that $\gamma^{(n,j)}$ is $d$-good for $\gamma^{(n,1)}$. By Lemma \ref{lem:rec}, we have \[ \gamma^{(n,j)} = 2\sum_{k=1}^{j-1} \gamma'^{(n-1,k)} + \sum_{k=j}^{n/2 } \gamma^{(n-1,k)}.\] Since $n-1=2(d-1)+3$ is odd, our induction hypothesis has each summand $d$-good for $\gamma^{(n-1,1)}$. The sum of $d$-good vectors is again $d$-good, so $\gamma^{(n,j)}$ is $d$-good for $\gamma^{(n-1,1)}$. By Lemma \ref{lem:transgood} (1), we conclude that $\gamma^{(n,j)}$ is also $d$-good for $\gamma^{(n,1)}$, proving part (1a).

Now we wish to show $\gamma'^{(n,j)}$, with $1\leq j\leq n/2$, is $(d+1)$-good for $\gamma^{(n,1)}$. From Lemma \ref{lem:rec} we have
\[
 \gamma'^{(n,j)} = \sum_{k=1}^{j-1} \gamma'^{(n-1,k)} + 2\sum_{k=j}^{n/2 } (0, \gamma^{(n-1,k)}).
 \]
In the degenerate case $j=1$, this gives $\gamma'^{(n,1)} = (0,2\gamma^{(n,1)})$, which is clearly $(d+1)$-good for $\gamma^{(n,1)}$. If $j>1$ we can rewrite this as:
\[ \gamma'^{(n,j)} = \sum_{k=2}^{j-1} \gamma'^{(n-1,k)} + (0, 2f),\] where \[ f = \gamma^{(n-1,1)} + \sum_{k=j}^{\lfloor n/2 \rfloor} \gamma^{(n-1,k)}.\] All the terms $\gamma'^{(n-1,k)}$ in the left summation are $d$-good for $\gamma^{(n-1,1)}$ by our induction hypothesis. By Lemma \ref{lem:transgood} (1), we have that these terms are $d$-good for $\gamma^{(n,1)}$. Also, by our induction hypothesis, $f=(1,f_1,\ldots,f_{d-1})$ is an FFK-vector, and as \[ \gamma^{(n,1)} = \sum_{k=1}^{n/2} \gamma^{(n-1,k)},\] we see that $\gamma^{(n,1)}$ dominates $f$. Thus, by Lemma \ref{lem:cone1}, we see that $(0,2f)$ is $(d+1)$-good for $\gamma^{(n,1)}$. The sum of $d$- and $(d+1)$-good vectors is $(d+1)$-good by Observation \ref{obs:dd+1}, so we conclude that $\gamma'^{(n,j)}$ is $(d+1)$-good for $\gamma^{(n,j)}$, proving 1(b) as desired.

\noindent\textbf{Case 2 ($\bm{n}$ odd).} Let $n=2d+3$, and consider $\gamma^{(n,j)}$ with $1<j\leq (n+1)/2$. We wish to show $\gamma^{(n,j)}$ is $(d+1)$-good for $\gamma^{(n,1)}$. As a special case, if $j=(n+1)/2$, then Lemma \ref{lem:rec} (1) has \[ \gamma^{(n,(n+1)/2)} = \sum_{k=1}^{(n-1)/2} \gamma'^{(n-1,k)}.\] As shown in Case 1, each term in the sum is $(d+1)$-good for $\gamma^{(n-1,1)}$, and so by Lemma \ref{lem:transgood}, we conclude that $\gamma^{(n,(n+1)/2)}$ is $(d+1)$-good for $\gamma^{(n,1)}$.

Now suppose $1<j<(n+1)/2$. By Lemma \ref{lem:rec} part (2), \[ \gamma^{(n,j)} = 2\sum_{k=1}^{j-1} \gamma'^{(n-1,k)} + \sum_{k=j}^{(n-1)/2} \gamma^{(n-1,k)}.\] It was shown in Case 1 that all the terms in the left summation are $(d+1)$-good for $\gamma^{(n-1,1)}$ and the terms in the right summation are $d$-good for $\gamma^{(n-1,1)}$. Thus, $\gamma^{(n,j)}$ is $(d+1)$-good for $\gamma^{(n-1,1)}$, and by Lemma \ref{lem:transgood} $\gamma^{(n,j)}$ is $(d+1)$-good for $\gamma^{(n,1)}$. This proves part (2a).

For part (2b), our analysis is identical to Case (1b). That is, in the $j=1$ case we get $\gamma'^{(n,1)} = (0,2\gamma^{(n-1,1)})$, which is clearly $(d+1)$-good for $\gamma^{(n-1,1)}$, and hence for $\gamma^{(n,1)}$. If $1<j<(n+1)/2$ we have \[ \gamma'^{(n,j)} = \sum_{k=2}^{j-1} \gamma'^{(n-1,k)} + (0, 2f),\] and it follows that all terms involved are $(d+1)$-good for $\gamma^{(n-1,1)}$; hence, for $\gamma^{(n,1)}$. This proves part (2b), and completes the proof of the proposition.
\end{proof}

As discussed above, we have now proved Theorem \ref{thm:bary}.

In particular, as barycentric subdivisions are flag, and the operation of taking barycentric subdivisions leaves the topology of the underlying space unchanged, we confirm the following case of Conjecture 1.4 of \cite{NP}. (Indeed, the $h$-vector of a homology sphere is symmetric and nonnegative \cite{StanleyUBC}.)

\begin{cor}\label{cor:homologyFFK}
If $\Delta$ is a homology sphere, then $\gamma(\sd(\Delta))$ is an FFK-vector.  In other words, $\gamma(\sd(\Delta))$ is the $f$-vector of a balanced simplicial complex.
\end{cor}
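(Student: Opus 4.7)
The plan is to deduce this statement as an immediate consequence of Theorem~\ref{thm:bary} by verifying its hypotheses for a homology sphere. First I would observe that a homology sphere $\Delta$ is in particular a simplicial complex, and every simplicial complex is a boolean complex (as noted in the discussion of boolean cell complexes in Section~\ref{sec:prelim}), so the boolean hypothesis of Theorem~\ref{thm:bary} is automatic. It then remains only to check that $h(\Delta)$ is both symmetric and nonnegative.

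For symmetry, I would invoke the Dehn--Sommerville relations, which guarantee $h_i(\Delta) = h_{d-i}(\Delta)$ for every $0\le i\le d$ whenever $\Delta$ is a $(d-1)$-dimensional homology sphere; this is the fact already recalled in the discussion of the $h$-polynomial in Section~\ref{sec:prelim}. For nonnegativity, I would appeal to Stanley's classical result \cite{StanleyUBC} that a homology sphere is Cohen--Macaulay, together with the fact that the $h$-vector of any Cohen--Macaulay simplicial complex has nonnegative entries. Both properties thus hold for $\Delta$.

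With both hypotheses of Theorem~\ref{thm:bary} verified, a direct application of that theorem yields that $\gamma(\sd(\Delta))$ is the $f$-vector of a balanced simplicial complex, which is exactly the statement that it is an FFK-vector. There is no real obstacle at this stage, since all of the substantive work is already absorbed into Theorem~\ref{thm:bary} and, upstream of it, into Proposition~\ref{prp:good}. The only additional remark worth recording in the write-up is that $\sd(\Delta)$ is always a flag simplicial complex and its geometric realization is homeomorphic to that of $\Delta$; consequently the corollary says precisely that the flag homology sphere $\sd(\Delta)$ satisfies the conclusion of Conjecture~\ref{conj:flagKrK}, as advertised in the Introduction.
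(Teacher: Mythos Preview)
Your proposal is correct and follows exactly the approach the paper takes: the corollary is stated with no separate proof beyond the parenthetical remark that the $h$-vector of a homology sphere is symmetric and nonnegative (citing \cite{StanleyUBC}), which are precisely the hypotheses of Theorem~\ref{thm:bary}. Your additional remark about $\sd(\Delta)$ being a flag homology sphere also matches the sentence immediately preceding the corollary in the paper.
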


Frohmader \cite{Frohmader} proved that the $f$-vectors of flag simplicial complexes form a (proper) subset of the $f$-vectors of balanced complexes. In \cite{NP}, all the $\gamma$-vectors in question were shown to be the $f$-vectors of flag complexes. This suggests the following problem, for which a positive answer would imply stronger conditions on $\gamma(\sd(\Delta))$ than Corollary \ref{cor:homologyFFK}.

\begin{prob}
Let $\Delta$ be a homology sphere. Is $\gamma(\sd(\Delta))$ the $f$-vector of a flag simplicial complex?
\end{prob}

\begin{remark}
It would be interesting if one could explicitly construct a simplicial complex $\Gamma(\sd(\Delta))$ such that $f(\Gamma) = \gamma(\sd(\Delta))$, as in Section 4 of \cite{NP}. Such a construction may follow from a ``nice" combinatorial interpretation for the entries of the vectors $\gamma^{(n,j)}$.
\end{remark}

\section{Implications for $h$- and $g$-vectors}\label{sec:implications}

It is easy to see that $\gamma$-nonnegativity implies $h$-nonnegativity, as well as $g$-nonnegativity. Moreover, if the polynomial $\gamma(t)$ has only real roots then $h(t)$ and $g(t)$ have only real roots as well. (This follows from the fact that $h(t)$ and $g(t)$ are obtained by totally nonnegative linear transformations of $\gamma(t)$. See work of Br\"and\'en \cite{B} for an introduction to these type of results.) In the context of this work, it is natural to ask whether the fact that $\gamma$ is the $f$-vector of a simplicial complex implies that the same is true of the $h$- or $g$-vector. We show here that the answer is affirmative.

\begin{remark}
Brenti and Welker show \cite[Theorem 3.1]{BW} that $h(\sd(\Delta); t)$ has only real roots. This is another way to prove the Charney-Davis conjecture for barycentric subdivisions. See, for example, the discussion after Example 3.7 in \cite{BW}. We have not attempted to investigate whether it is true that $\gamma(\sd(\Delta);t)$ has only real roots, but if true, this would imply Brenti and Welker's Theorem 3.1 as well. However, as already mentioned in the introduction, our Theorem \ref{thm:bary} implies Gal's conjecture for barycentric subdivisions, which in turn implies the Charney-Davis conjecture in this case.
\end{remark}

Suppose throughout this section that, respectively, $h=(h_0,h_1,\ldots,h_d)$ (with $h_i = h_{d-i}$), $g=(g_0,g_1,\ldots, g_{\lfloor d/2\rfloor})$, and $\gamma=(\gamma_0,\gamma_1,\ldots, \gamma_{\lfloor d/2 \rfloor})$ are the $h$-, $g$-, and $\gamma$-vectors of some $(d-1)$-dimensional boolean complex with a nonnegative and symmetric $h$-vector.

First, we have the following observation for relating the $h$- and $g$-vectors to the $\gamma$-vector.

\begin{obs}\label{obs:trans}
We have, for $0\leq i \leq \lfloor d/2 \rfloor$:
\[ h_i = h_{d-i} = \sum_{0\leq j \leq i} \gamma_j \binom{d-2j}{i-j},\] and \[g_i = \sum_{0\leq j \leq i} \gamma_j\left( \binom{d-2j}{i-j}-\binom{d-2j}{i-j-1} \right).\]
\end{obs}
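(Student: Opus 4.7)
The plan is to read both identities directly off the defining expansion of the $\gamma$-vector. By definition,
\[
h(\Delta;t) \;=\; \sum_{j=0}^{\lfloor d/2 \rfloor} \gamma_j\, t^j(1+t)^{d-2j},
\]
so the first identity will follow simply by extracting the coefficient of $t^i$ on both sides, and the second will follow from the first together with the definition $g_i = h_i - h_{i-1}$.

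For the formula for $h_i$, I would apply the binomial theorem to $(1+t)^{d-2j}$ to get
\[
\gamma_j\, t^j (1+t)^{d-2j} \;=\; \sum_{m \geq 0} \gamma_j \binom{d-2j}{m} t^{j+m}.
\]
Setting $m = i-j$ and summing over $j$, the coefficient of $t^i$ equals $\sum_{j} \gamma_j \binom{d-2j}{i-j}$, where we use the convention $\binom{n}{k}=0$ for $k<0$, which restricts the sum to $0 \le j \le i$. (Entries with $j > \lfloor d/2 \rfloor$ are absent, but for $i \leq \lfloor d/2 \rfloor$ this range is automatic.) The equality $h_i = h_{d-i}$ is just the hypothesis that the $h$-vector is symmetric, which is what makes the $\gamma$-expansion well-defined in the first place.

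For the formula for $g_i$, I would use $g_i = h_i - h_{i-1}$ from the preliminaries and substitute the formula just obtained:
\[
g_i \;=\; \sum_{0\le j\le i} \gamma_j \binom{d-2j}{i-j} \;-\; \sum_{0 \le j \le i-1} \gamma_j \binom{d-2j}{i-j-1}.
\]
Since $\binom{d-2j}{i-j-1}=0$ when $j=i$, both sums can be taken over the same range $0 \le j \le i$, and combining them yields the stated expression
\[
g_i \;=\; \sum_{0 \le j \le i} \gamma_j \left( \binom{d-2j}{i-j} - \binom{d-2j}{i-j-1} \right).
\]

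There is no real obstacle here: both identities are purely formal consequences of the definition of $\gamma$ via the basis $\{t^j(1+t)^{d-2j}\}$ and the binomial theorem. The only small bookkeeping point is keeping track of the convention $\binom{n}{k}=0$ for $k<0$ so that the index ranges in the two displayed identities are written uniformly as $0 \leq j \leq i$.
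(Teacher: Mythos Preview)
Your proof is correct and is exactly the straightforward verification the paper has in mind; indeed, the paper states this as an observation without proof, and your argument---extracting the coefficient of $t^i$ in $\sum_j \gamma_j t^j(1+t)^{d-2j}$ via the binomial theorem and then subtracting to get $g_i$---is the canonical one.
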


We can transform a $\gamma$-vector into its corresponding $h$- or $g$-vector with the following transformations:
\[ A:=\left[ \binom{d-2j}{i-j} \right]_{0\leq i,j \leq d/2}, \quad B:=\left[ \binom{d-2j}{i-j}-\binom{d-2j}{i-j-1}\right]_{0 \leq i,j \leq d/2}.\] We have $A\gamma = (h_0, \ldots,h_{\lfloor d/2 \rfloor})$ (construct the rest of $h$ by symmetry) and $B\gamma = g$. We remark that matrices $A$ and $B$ are \emph{totally nonnegative}, i.e., the determinant of every minor of these matrices are nonnegative. (It is straightforward to prove this with a ``Lindstrom-Gessel-Viennot"-type argument involving planar networks.)

Notice that at the extremes we have $g_0 = \gamma_0$ and \[ g_{\lfloor d/2 \rfloor} = \sum_{0\leq j \leq d/2} \gamma_j C_{\lceil d/2\rceil -j},\] where $C_r = \binom{2r}{r} - \binom{2r}{r-1} = \binom{2r}{r}/(r+1)$ is a Catalan number.

\begin{prp}\label{prp:fh}
If $\gamma$ is an $f$-vector of a simplicial complex then $h$ is an $f$-vector of a simplicial complex.
\end{prp}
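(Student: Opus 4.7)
The plan is to exhibit a simplicial complex $\Lambda$ with $f(\Lambda) = h$, given a simplicial complex $\Gamma$ satisfying $f(\Gamma) = \gamma$. The idea is to read the identity $h(t) = \sum_{F \in \Gamma} t^{|F|}(1+t)^{d-2|F|}$ face by face: each face $F \in \Gamma$ of size $|F| = j$ should contribute the summand $t^j(1+t)^{d-2j}$, which is naturally the generating function for $F$ together with all subsets of a distinguished $(d-2j)$-element set of auxiliary vertices.

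Concretely, let $V$ be the vertex set of $\Gamma$ and introduce a disjoint pool $W = \{u_1, u_2, \ldots, u_d\}$. For each $F \in \Gamma$, set
\[ A_F := \{u_1, u_2, \ldots, u_{d-2|F|}\} \subseteq W, \]
which is well-defined because $\dim \Gamma \leq \lfloor d/2 \rfloor - 1$, i.e.\ $|F| \leq \lfloor d/2 \rfloor$. Define $\Lambda$ to be the simplicial complex on $V \cup W$ generated by $\{F \cup A_F : F \in \Gamma\}$. The critical property of this ``initial-segment'' choice is the nesting
\[ F' \subseteq F \ \Longrightarrow\ A_F \subseteq A_{F'}, \]
since $|F'| \leq |F|$ forces $d - 2|F'| \geq d - 2|F|$.

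Because $V \cap W = \emptyset$, every face of $\Lambda$ admits a unique decomposition as $F' \cup S$ with $F' \subseteq V$ and $S \subseteq W$; such a set lies in $\Lambda$ precisely when $F' \in \Gamma$ and $S \subseteq A_F$ for some $F \in \Gamma$ with $F \supseteq F'$. By the nesting, the union $\bigcup_{F \supseteq F'} A_F$ collapses to $A_{F'}$ itself (attained at $F = F'$). Summing the contributions yields
\[ f(\Lambda;t) \;=\; \sum_{F' \in \Gamma} t^{|F'|} \sum_{S \subseteq A_{F'}} t^{|S|} \;=\; \sum_{F' \in \Gamma} t^{|F'|}(1+t)^{d-2|F'|} \;=\; \sum_{j=0}^{\lfloor d/2 \rfloor} \gamma_j\, t^j(1+t)^{d-2j} \;=\; h(t), \]
so $f(\Lambda) = h$, as desired.

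The main point requiring care is the nesting step. Without $A_F \subseteq A_{F'}$ for $F' \subseteq F$, a single face $F' \cup S$ of $\Lambda$ could arise from multiple generators with $S$ using auxiliary vertices outside $A_{F'}$, and the clean ``$(1+t)^{d-2|F'|}$ per face of $\Gamma$'' bookkeeping would break. Choosing $A_F$ as the initial segment of $W$ of the correct length is the simplest uniform device that forces the nesting and makes the $f$-polynomial collapse to $h(t)$ exactly.
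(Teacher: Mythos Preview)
Your proof is correct and is essentially the same construction as in the paper: the paper takes $\Delta = \{F \cup G : F \in \F(\gamma),\ G \in 2^{[d-2|F|]}\}$ on a disjoint auxiliary vertex set, which is exactly your $\Lambda$ with $A_F = \{u_1,\ldots,u_{d-2|F|}\}$, and verifies closure under subsets via the same nesting observation. The only cosmetic difference is that the paper fixes $\Gamma = \F(\gamma)$ to be the compressed complex, whereas you (harmlessly) allow an arbitrary $\Gamma$ with $f(\Gamma)=\gamma$.
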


\begin{proof}
Let $\F(\gamma)$ denote the standard compressed complex for $\gamma$ (take the definition for $\F_d(\gamma)$ and let $d \to \infty$). Then
$\F(\gamma)$ is a simiplicial complex. Let \[ \Delta = \{ F \cup G : F \in \F(\gamma), G \in 2^{[d - 2|F|]} \},\] where $2^{[k]}$ denotes a $(k-1)$-simplex on a vertex set $\{1,2,\ldots,k\}$ disjoint from the vertex set of $\F(\gamma)$. It is straightforward to see that $\Delta$ is in fact a simplicial complex. Indeed, if $\bar F = F \cup G$ is a face of $\Delta$, then all subsets of $\bar F$ are of the form $\bar H = F' \cup G'$, where $F' \subseteq F$ and $G' \subseteq G$. As $F' \in \F(\gamma)$ and $G' \in 2^{[k]}$ for all $k \geq |G'|$, we see that $\bar H \in \Delta$.

We compute
\begin{align*}
f_i(\Delta) = \sum_{|F\cup G| =i} 1 = \sum_{0\leq j \leq i} \sum_{|F| = j} \sum_{|G| = i-j} 1 &= \sum_{0\leq j\leq i} f_j(\F(\gamma))\cdot f_{i-j}(2^{[d-2j]}),\\
&=\sum_{0\leq j \leq i} \gamma_j \binom{d-2j}{i-j}.
\end{align*}

By Observation \ref{obs:trans}, we conclude that $h_i = f_i(\Delta)$, completing the proof.
\end{proof}

The corresponding result for $g$-vectors follows the same approach, but first requires the construction of an auxillary complex $\B(k)$ defined below.

A \emph{ballot path} is a lattice path that starts at $(0,0)$, takes steps of the form $(0,1)$ (north) and $(1,0)$ (east), and does not go above the line $y=x$ (but it can touch the line). We write ballot paths as words $p$ on $\{N,E\}$ such that any initial subword of $p$ has at least as many letters $E$ as letters $N$.
Let $B(k)=\{\mbox{ballot paths of length } k\}$, and for $p =p_1\cdots p_k \in B(k)$, let $S(p) = \{ k+1-i : p_i = N\}$ denote the set of positions of letters $N$ in $p$ (read from right to left). For example, the ballot path $p=ENEENEE$ has $S(p)=\{3,6\}$.

Let $\B(k) = \{ S(p) : p \in B(k) \}$ denote the set of sets of north steps for ballot paths of length $k$.

\begin{prp}
For any $k \geq 0$, $\B(k)$ is a simplicial complex on vertex set $[k-1]$. Moreover, $\dim \B(k) = \lfloor k/2 \rfloor -1$, and $f(\B(k)) = (f_0, f_1, \ldots, f_{\lfloor k/2 \rfloor})$ is given by \[ f_i(\B(k)) = \binom{k}{i} - \binom{k}{i-1}.\]
\end{prp}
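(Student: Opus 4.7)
My plan is to exploit the map $p\mapsto S(p)$, which is clearly a bijection between $B(k)$ and $\B(k)$ (the word $p\in\{N,E\}^k$ is determined by the positions of its $N$-letters). First I would observe that $\B(k)\subseteq 2^{[k-1]}$: the ballot condition forces $p_1=E$, so no element of $S(p)$ equals $k$.

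The core of the argument is establishing that $\B(k)$ is closed under taking subsets, and I plan to do this by a simple monotonicity of the ballot condition. Given $S=S(p)\in\B(k)$ and $T\subseteq S$, let $p'\in\{N,E\}^k$ be defined by $(p')_i=N$ iff $k+1-i\in T$. Then $p'$ is obtained from $p$ by changing some $N$-letters into $E$-letters; such a change only increases the surplus $\#\{i\leq j:(p')_i=E\}-\#\{i\leq j:(p')_i=N\}$ on every prefix of length $j$. Since this surplus is nonnegative for $p$, it is nonnegative for $p'$, so $p'\in B(k)$ and $T=S(p')\in\B(k)$.

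The dimension claim is then immediate: a ballot path of length $k$ has at most $\lfloor k/2\rfloor$ $N$-letters, with equality realised by $(EN)^{\lfloor k/2\rfloor}E^{k-2\lfloor k/2\rfloor}$, and $|S(p)|$ equals the number of $N$'s in $p$. For the $f$-vector count, the bijection restricts to a bijection between $i$-element faces of $\B(k)$ and ballot paths of length $k$ with exactly $i$ $N$-letters. These are precisely the lattice paths from $(0,0)$ to $(k-i,i)$ staying weakly below $y=x$, and the classical reflection principle counts them as $\binom{k}{i}-\binom{k}{i-1}$.

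I do not expect any genuine obstacle: the only place where an honest argument is needed is the closure under taking subsets, and that reduces to the one-line observation that replacing an $N$ by an $E$ cannot hurt the ballot condition. The rest is either a direct combinatorial translation or an invocation of the classical ballot problem.
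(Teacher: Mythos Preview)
Your proposal is correct and follows essentially the same approach as the paper: both prove closure under subsets by observing that changing an $N$ to an $E$ preserves the ballot condition, deduce the vertex set from $p_1=E$, read off the dimension from the maximum number of $N$-steps, and invoke the reflection principle for the face count. Your write-up is slightly more explicit about the surplus argument and gives an explicit maximal path, but the ideas are identical.
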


\begin{proof}
To see that $\B(k)$ is a simplicial complex, simply observe that if $p$ is ballot path, changing any north step to an east step results in a path that remains below $y=x$. That the vertex set is $\{1,2,\ldots,k-1\}$, follows from observing that the first step of a ballot path must be east.

The dimension of a face is simply one less than the number of north steps taken in the corresponding path. Thus,
\begin{align*}
f_i(\B(k)) &= |\{ p \in B(k) : |S(p)| = i\}|\\
 &= |\{\mbox{lattice paths from $(0,0)$ to $(k-i,i)$ not surpassing the line $y=x$}\}|\\
 &= \binom{k}{i} - \binom{k}{i-1}.
\end{align*} The final equality is a straightforward counting argument described, e.g., in \cite[Exercise 6.20]{St}. (Show there are $\binom{k}{i-1}$ paths from $(0,0)$ to $(k-i,i)$ that go above $y=x$.)

To complete the proof, observe that a maximal face corresponds to a path from $(0,0)$ to $(k/2,k/2)$ if $k$ is even, or from $(0,0)$ to $(\frac{k+1}{2}, \frac{k-1}{2})$ if $k$ is odd. Thus, $\dim \B(k) = \lfloor k/2 \rfloor -1$.
\end{proof}

We remark that as $f_{\lfloor k/2 \rfloor} = \binom{k}{\lfloor k/2 \rfloor} - \binom{k}{\lfloor k/2 \rfloor -1} = C_{\lceil k/2 \rceil}$, the number of facets of $\B(k)$ is a Catalan number.

\begin{prp}\label{prp:fg}
If $\gamma$ is an $f$-vector of a simplicial complex then $g$ is an $f$-vector of a simplicial complex.
\end{prp}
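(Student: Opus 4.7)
The plan is to mimic the proof of Proposition~\ref{prp:fh}, replacing the simplex $2^{[d-2|F|]}$ by the ballot path complex $\B(d-2|F|)$ just introduced. The motivation is transparent: the entries $\binom{k}{i}-\binom{k}{i-1}$ of the matrix $B$ in Observation~\ref{obs:trans} are precisely $f_i(\B(k))$, whereas $\binom{k}{i}$ in the matrix $A$ was precisely $f_i(2^{[k]})$. Concretely, I would take $\F(\gamma)$ (the standard compressed complex for $\gamma$) together with the complexes $\B(d-2j)$ for $0\leq j \leq \lfloor d/2\rfloor$, realized on vertex sets pairwise disjoint from that of $\F(\gamma)$ (with all the $\B(d-2j)$ sharing a common vertex set, namely $[d-1]$), and set
\[\Delta := \bigl\{\, F \cup G \;:\; F \in \F(\gamma),\; G \in \B(d-2|F|)\,\bigr\}.\]

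Granting for the moment that $\Delta$ is a simplicial complex, the $f$-vector calculation then runs exactly in parallel with that of Proposition~\ref{prp:fh}:
\begin{align*}
f_i(\Delta) &= \sum_{0\leq j\leq i} f_j(\F(\gamma))\cdot f_{i-j}(\B(d-2j))\\
&= \sum_{0\leq j\leq i} \gamma_j\left(\binom{d-2j}{i-j} - \binom{d-2j}{i-j-1}\right) = g_i,
\end{align*}
the final equality being Observation~\ref{obs:trans}. This would complete the argument.

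The one genuinely new point, and the main obstacle, is verifying that $\Delta$ is downward closed. Suppose $\bar F = F \cup G \in \Delta$ and $\bar H \subseteq \bar F$; write $\bar H = F' \cup G'$ with $F' \subseteq F$ and $G' \subseteq G$. Then $F' \in \F(\gamma)$ automatically, and $G' \in \B(d-2|F|)$ because $\B(d-2|F|)$ is itself a simplicial complex; but to place $\bar H$ in $\Delta$ one needs $G' \in \B(d-2|F'|)$, and $d-2|F'|$ may be strictly \emph{larger} than $d-2|F|$. The missing ingredient is therefore the containment
\[\B(k) \subseteq \B(k+2) \qquad \text{for every } k \geq 0,\]
which I would prove as follows: given a ballot path $p$ of length $k$ with $S(p)=S$, the prepended word $p' := EEp$ is again a ballot path (of length $k+2$), since prepending two east steps cannot violate the constraint $y\leq x$; moreover, because $S$ records $N$-positions counted from the right, a direct computation shows $S(p') = S$, so $S \in \B(k+2)$. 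Iterating gives $\B(k) \subseteq \B(k+2m)$ for all $m\geq 0$, which is exactly what downward closure of $\Delta$ requires. With this inclusion in hand everything else is routine, and the $f$-vector computation above finishes the proof.
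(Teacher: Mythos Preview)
Your proposal is correct and matches the paper's proof essentially line for line: the same construction $\Delta=\{F\cup G: F\in\F(\gamma),\ G\in\B(d-2|F|)\}$, the same $f$-vector computation, and the same key step of prepending east steps to show the needed containment among the $\B(k)$. The only cosmetic difference is that the paper prepends $i$ east steps at once (obtaining $\B(k)\subseteq\B(k+i)$ for all $i\ge 0$), whereas you prepend two at a time and iterate---which is exactly what the parity of $d-2|F'|$ versus $d-2|F|$ requires.
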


\begin{proof}
Similarly to the proof of Proposition \ref{prp:fh}, let $\F(\gamma)$ denote the standard compressed complex for $\gamma$. Now, let \[ \Delta = \{ F \cup G : F \in \F(\gamma), G \in \B(d - 2|F|) \},\] where we take the vertex set $\{1,2,\ldots,k-1\}$ for $\B(k)$ to be disjoint from the vertex set of $\F(\gamma)$. To see that $\Delta$ is in fact a simplicial complex suppose $\bar F = F \cup G$ is a face of $\Delta$. All subsets of $\bar F$ are of the form $\bar H = F' \cup G'$, where $F' \subseteq F$ and $G' \subseteq G$. That $F' \in \F(\gamma)$ is obvious. To prove $\bar H \in \Delta$, it remains to show that $G' \in \B(k)$ for all $k \geq |G'|$. To see this, consider path $p \in B(|G'|)$ such that $S(p)=G'$. For any $i\geq 0$, we can extend this path to a path $p'$ in $B(|G'|+i)$ by prepending $i$ east steps to $p$. In other words, \[ p' = \underbrace{EE\cdots E}_i p \in B(|G'|+i)\] and $S(p') = S(p) = G'$.

We conclude
\begin{align*}
f_i(\Delta) = \sum_{|F\cup G| =i} 1 = \sum_{0\leq j \leq i} \sum_{|F| = j} \sum_{|G| = i-j} 1 &= \sum_{0\leq j\leq i} f_j(\F(\gamma))\cdot f_{i-j}(\B(d-2j)),\\
&=\sum_{0\leq j \leq i} \gamma_j \left( \binom{d-2j}{i-j} - \binom{d-2j}{i-j-1}\right).
\end{align*}

By Observation \ref{obs:trans}, we conclude that $g_i = f_i(\Delta)$, completing the proof.
\end{proof}

Considering the results of this paper and those of \cite[Theorem 6.1]{NP}, we have the following consequences of Propositions \ref{prp:fh} and \ref{prp:fg}.

\begin{cor}\label{thm:gh}
We have that $h(\Delta)$ and $g(\Delta)$ are $f$-vectors of simplicial complexes if:
\begin{enumerate}
 \item[(a)] $\Delta$ is the barycentric subdivision of a boolean complex with symmetric and nonnegative $h$-vector (e.g., $\Delta$ is the barycentric subdivision of a simplicial sphere).
 \item[(b)] $\Delta$ is a Coxeter complex.
 \item[(c)] $\Delta$ is the simplicial complex dual to an associahedron.
 \item[(d)] $\Delta$ is the simplicial complex dual to a cyclohedron.
 \item[(e)] $\Delta$ is a flag homology sphere with $\gamma_1(\Delta)\leq 3$.
\end{enumerate}
\end{cor}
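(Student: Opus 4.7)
The plan is to reduce everything to a single input: in each of the cases (a)--(e), the $\gamma$-vector is already known to be the $f$-vector of a (balanced) simplicial complex, so Propositions \ref{prp:fh} and \ref{prp:fg} finish the job mechanically. Thus no new structural argument is needed beyond invoking the appropriate prior results.

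First I would observe that it suffices to prove, in each case, that $\gamma(\Delta)$ is the $f$-vector of a simplicial complex. Indeed, once that hypothesis holds, Proposition \ref{prp:fh} produces a simplicial complex whose $f$-vector is $h(\Delta)$, and Proposition \ref{prp:fg} produces one whose $f$-vector is $g(\Delta)$. So the entire corollary is reduced to verifying the hypothesis of these propositions case by case.

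Next I would dispose of the cases by citation. For (a), the input is exactly Theorem \ref{thm:bary} of the present paper: when $\Delta$ is the barycentric subdivision of a boolean complex with nonnegative symmetric $h$-vector, $\gamma(\Delta)$ is an FFK-vector, hence in particular the $f$-vector of a (balanced) simplicial complex; the parenthetical special case of barycentric subdivisions of simplicial spheres is covered because, as noted just before Corollary \ref{cor:homologyFFK}, simplicial (indeed homology) spheres have nonnegative symmetric $h$-vectors by the Dehn--Sommerville relations and Cohen--Macaulayness. For (b)--(e), the input is \cite[Theorem 6.1]{NP}, which asserts exactly that $\gamma(\Delta)$ is an FFK-vector (hence the $f$-vector of a balanced simplicial complex) for Coxeter complexes, duals of associahedra and cyclohedra, and flag homology spheres with $\gamma_1\le 3$.

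Finally, I would assemble the argument: in each case (a)--(e) apply the relevant cited result to obtain a simplicial complex with $f$-vector $\gamma(\Delta)$, then feed this complex into the constructions in the proofs of Propositions \ref{prp:fh} and \ref{prp:fg} (the joins with boolean simplices $2^{[d-2|F|]}$ for $h$, and with the ballot-path complexes $\B(d-2|F|)$ for $g$) to produce simplicial complexes realizing $h(\Delta)$ and $g(\Delta)$ as $f$-vectors. There is no real obstacle here; the only thing to be mindful of is a bookkeeping point, namely that Propositions \ref{prp:fh} and \ref{prp:fg} presuppose that $h(\Delta)$ is symmetric and nonnegative (so that $\gamma$ and $g$ are even defined), which in each of (a)--(e) holds because $\Delta$ is a homology sphere or otherwise satisfies Dehn--Sommerville, so the transformations of Observation \ref{obs:trans} are legitimate.
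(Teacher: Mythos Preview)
Your proposal is correct and follows exactly the approach the paper intends: the corollary is stated immediately after Propositions \ref{prp:fh} and \ref{prp:fg} as their consequence, with case (a) supplied by Theorem \ref{thm:bary} and cases (b)--(e) by \cite[Theorem 6.1]{NP}. There is nothing to add.
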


We remark that it is known that $h(\Delta)$ is the $f$-vector of a balanced simplicial complex whenever $\Delta$ is a balanced Cohen-Macaulay complex \cite[Theorem 4.6]{Stanley}. As mentioned in the introduction, recent work of Murai \cite{MuraiBr} proves Corollary \ref{thm:gh}(a) (for $g$-vectors) directly, building on the approach of Brenti and Welker.
That the $g$-vector of the barycentric subdivision of a Cohen-Macaulay complex is an $M$-sequence was proved earlier by algebraic means  \cite{KN}.

One may ask whether the results of Propositions \ref{prp:fh} and \ref{prp:fg} can be strengthened to $f$-vectors of \emph{balanced} simplicial complexes. 
As we will show below, the answer is Yes for the $h$-vector and No for the $g$-vector.

First let us remark that for flag spheres of dimension at most $3$, it is known that the $\gamma$-vector is an FFK-vector. Then a simple computation shows that the $g$-vector is an FFK-vector as well. However, this does not hold in higher dimensions. 

\begin{example}
Let $\Delta$ be the triangulation of the $4$-sphere obtained by taking the suspension of the join of two $(k+2)$-gons, where $k\geq 2$. Then $\Delta$ is a flag sphere with \[h(\Delta) = (1,2k+1,(k+1)^2+1, (k+1)^2+1, 2k+1,1).\] Its $\gamma$-vector is  $(1,2(k-2), (k-2)^2)$, which is $2$-FFK, while $g(\Delta)=(1,2k, k^2+1)$, which is not $2$-FFK. 
\end{example}
(We see that $\gamma$ is the $f$-vector the complete bipartite graph on $2(k-2)$ vertices, while there can be no bipartite graph with $2k$ vertices and more than $k^2$ edges.)

\begin{prp}\label{prp:FFKh}
If $\gamma$ is the $f$-vector of a balanced simplicial complex then $h$ is the $f$-vector of a balanced simplicial complex.
\end{prp}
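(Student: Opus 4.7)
The plan is to refine the construction in the proof of Proposition~\ref{prp:fh} so that the complex it produces is balanced. Since $\gamma$ is the $f$-vector of a balanced complex, we may fix such a $K$ with $f(K) = \gamma$ and a proper coloring $c \colon V(K) \to [\lfloor d/2 \rfloor]$ (for instance $K = \F_{\lfloor d/2 \rfloor}(\gamma)$, possibly leaving some colors unused). Adjoin $d$ auxiliary vertices $u_1, \ldots, u_d$ disjoint from $V(K)$, extend $c$ by $c(u_i) = i$, and for each face $F \in K$ let $S(F)$ be the set of the $d - 2|F|$ auxiliary vertices $u_i$ of smallest index for which $i \notin c(F)$; this is well defined because $|\{i \in [d] : i \notin c(F)\}| = d - |F| \geq d - 2|F|$. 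The candidate complex is
\[
\Delta \;=\; \{\, F \cup G : F \in K,\ G \subseteq S(F)\,\}.
\]

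The face count and balancedness follow directly from the construction. Since $V(K) \cap \{u_1,\ldots,u_d\} = \emptyset$, each face admits a unique presentation as $F \cup G$, so
\[
f_i(\Delta) \;=\; \sum_{F \in K} \binom{d - 2|F|}{i - |F|} \;=\; \sum_{j=0}^{\lfloor d/2 \rfloor} \gamma_j \binom{d-2j}{i-j} \;=\; h_i
\]
by Observation~\ref{obs:trans}, where we use that the middle sum is invariant under $i \leftrightarrow d - i$ and so computes $h_i$ for all $i$. Each face $F \cup G$ has distinctly colored vertices: within $F$ by the balance of $K$, within $G$ because $c(u_i)=i$, and between $F$ and $G$ because $G \subseteq \{u_i : i \notin c(F)\}$ forces $c(F) \cap c(G) = \emptyset$. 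Thus $\Delta$ will be balanced with color set $[d]$.

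The main point is to verify that $\Delta$ is closed under taking subsets; this is where the specific gap $d - 2|F|$ in the definition of $S(F)$ is essential. The natural reduction is to prove that $F' \subseteq F$ in $K$ implies $S(F) \subseteq S(F')$, for then any subset $F' \cup G'$ of a face $F \cup G \in \Delta$ satisfies $F' \in K$ and $G' \subseteq G \subseteq S(F) \subseteq S(F')$. To establish this reduction, write $S(F) = \{u_{i_1}, \ldots, u_{i_{d-2|F|}}\}$ with $i_1 < \cdots < i_{d-2|F|}$, so that $i_l$ has rank $l$ among indices outside $c(F)$. Its rank among indices outside $c(F') \subseteq c(F)$ is then
\[
l + \bigl|(c(F) \setminus c(F')) \cap [1, i_l]\bigr| \;\leq\; l + (|F| - |F'|) \;\leq\; (d - 2|F|) + (|F| - |F'|) \;=\; d - |F| - |F'| \;\leq\; d - 2|F'|,
\]
and hence $u_{i_l} \in S(F')$. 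This rank comparison is the substantive step of the argument, and the chain of inequalities is engineered precisely so that the factor of $2$ in the gap $d - 2|F|$ provides enough slack as $|F'|$ shrinks; any smaller gap would fail here.
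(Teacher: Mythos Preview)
Your proof is correct and follows essentially the same construction as the paper's: both adjoin $d$ auxiliary vertices colored by $[d]$ and, for each face $F$ of a properly colored realization of $\gamma$, allow joining subsets of a specified $(d-2|F|)$-element set of auxiliary vertices avoiding $c(F)$, then verify closure via the same rank inequality. The only cosmetic differences are that the paper first re-realizes $\gamma$ as a $d$-colorable complex (rather than keeping the $\lfloor d/2\rfloor$-coloring) and takes the \emph{last} $d-2|F|$ available colors where you take the first.
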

\begin{proof}
Let $\gamma$ be the $f$-vector of a balanced complex. In particular, $\gamma$ is $d$-FKK (as it is $\lfloor \frac{d}{2}\rfloor$-FFK).
Let $\Delta$ be a $d$-colorable simplicial complex with
$f(\Delta)=\gamma$ on a vertex set $V$, and with a proper coloring $c: V \rightarrow [d]$  (assume $V\cap [d]=\emptyset$).
For $F \in \Delta$, let $d(F)$ be the set of the first $|F|$ integers in $[d] -c(F)$.
Consider the following simplicial complex

\[\Gamma = \{ F \cup G:\ F \in \Delta, G \subset 2^{[d] - (c(F)\cup d(F))}\}.\]

Then $\Gamma$ is a $d$-colorable simplicial complex with $f(\Gamma)=h$. Indeed, each face $F\cup G$ of $\Gamma$ has distinctly colored vertices by construction. To see that $\Gamma$ is indeed a simplicial complex, note that $F' \subseteq F\subseteq V$ implies $c(F') \subseteq c(F)$, so if $F\cup G\in \Gamma$ then $G\subseteq [d] - (c(F)\cup d(F)) \subseteq [d] - (c(F')\cup d(F'))$, hence $F'\cup G \in \Gamma$.
\end{proof}

Thus, a consequence of Conjecture \ref{conj:flagKrK} would be the following.

\begin{conj}
If $\Delta$ is a flag homology sphere then $g(\Delta)$ is the $f$-vector of a simplicial complex and $h(\Delta)$ is the $f$-vector of a balanced simplicial complex.
\end{conj}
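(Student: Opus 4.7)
The plan is to derive the conjecture directly from Conjecture~\ref{conj:flagKrK}. If $\Delta$ is a flag homology sphere, then $h(\Delta)$ is symmetric (Dehn-Sommerville) and nonnegative (Cohen-Macaulayness), so $\gamma(\Delta)$ is well-defined. Granting Conjecture~\ref{conj:flagKrK}, the vector $\gamma(\Delta)$ is the $f$-vector of a balanced simplicial complex, and Proposition~\ref{prp:FFKh} then produces a balanced simplicial complex with $f$-vector $h(\Delta)$. Since every balanced simplicial complex is in particular a simplicial complex, $\gamma(\Delta)$ is also the $f$-vector of a simplicial complex, and Proposition~\ref{prp:fg} supplies a simplicial complex with $f$-vector $g(\Delta)$. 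This would establish both halves of the conjecture.

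The main obstacle is that Conjecture~\ref{conj:flagKrK} is wide open. Its known cases---Coxeter complexes, the simplicial complexes dual to associahedra and cyclohedra, and flag spheres with $\gamma_1\le 3$---immediately yield the present conjecture by the same reduction, since in each of those settings $\gamma(\Delta)$ has been realized in \cite{NP} as the $f$-vector of a flag simplicial complex, and Frohmader's theorem then gives the FFK property needed to apply Proposition~\ref{prp:FFKh}. So the substantive remaining challenge is the general flag homology sphere, and the natural intermediate target is Conjecture~\ref{conj:flagKrK} itself rather than the weaker statement under consideration.

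An alternative would be to bypass Conjecture~\ref{conj:flagKrK} and construct directly a balanced complex realizing $h(\Delta)$, for instance by partitioning the identity $h_i = \sum_{j\le i} \gamma_j\binom{d-2j}{i-j}$ from Observation~\ref{obs:trans} into contributions indexed by $j$ and building the realizing complex as a union of cones, iterating Lemmas~\ref{lem:cone1} and \ref{lem:dgood} in the style of Proposition~\ref{prp:FFKh}. The hardest step is that the proof of Proposition~\ref{prp:FFKh} relies crucially on having in hand a proper coloring of a balanced simplicial complex with $f$-vector $\gamma(\Delta)$; furnishing such a coloring without first establishing Conjecture~\ref{conj:flagKrK} seems to require a new combinatorial model for $\gamma$ of an arbitrary flag homology sphere, analogous to the $\widehat{S}_n$-model for barycentric subdivisions or the Foata-Sch\"utzenberger interpretation for type $A$ Coxeter complexes. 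Obtaining such a model is the chief obstacle, and any progress there would likely settle Conjecture~\ref{conj:flagKrK} in full as well.
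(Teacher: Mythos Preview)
Your proposal is correct and matches the paper's own treatment: the statement is presented in the paper not as a theorem but as a conjecture, explicitly introduced as ``a consequence of Conjecture~\ref{conj:flagKrK}'' via Propositions~\ref{prp:fg} and~\ref{prp:FFKh}, which is precisely the reduction you give. Your additional discussion of the known special cases and of the obstacles to a direct approach is accurate and goes somewhat beyond what the paper says, but the essential logical route is identical.
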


\vspace{0.1 in}
{\it{Acknowledgement.}} 
We thank Satoshi Murai for pointing out how to modify the proof of Proposition \ref{prp:fh} in order to prove Proposition \ref{prp:FFKh}.

\end{document}